\newcommand{\be}{\mathbf e}
\newcommand{\bw}{\mathbf w}
\newcommand{\bx}{\mathbf x}
\newcommand{\by}{\mathbf y}
\newcommand{\bz}{\mathbf z}
\newcommand{\bv}{\mathbf v}
\newcommand{\bu}{\mathbf u}
\newcommand{\calh}{{\mathcal H}}
\newcommand{\calm}{{\mathcal M}}
\newcommand{\calo}{{\mathcal O}}
\newcommand{\cals}{{\mathcal S}}
\newcommand{\blambda}{{\boldsymbol \lambda}}
\newcommand{\bomega}{{\boldsymbol \omega}}
\newcommand{\bxi}{{\boldsymbol \xi}}
\newcommand{\zero}{\mathbf 0}
\newcommand{\ph}{\phantom}
\newcommand{\fns}{\footnotesize}
\newcommand{\wh}{\widehat}
\newcommand{\wt}{\widetilde}
\newcommand{\sca}[1]{\scalebox{.8}{$#1$}}
\newcommand{\mtxb}[1]{\left[ \begin{matrix} #1 \end{matrix} \right]}
\newcommand{\smtxa}[2]{
{\mbox{\footnotesize $\left[\!\! \begin{array}{#1} #2 \end{array} \!\! \right]$}}}
\newcommand{\smath}[1]{{\mbox{\scriptsize $#1$}}}
\newcommand{\ch}[1]{{\color{blue}#1}}  
\newcommand{\chh}[1]{{\color{orange}#1}}  
\renewcommand{\ch}[1]{#1} 
\renewcommand{\chh}[1]{#1} 
\newtheorem{theorem}{Theorem}
\newcounter{conjecture}
\newtheorem{example}[conjecture]{Example}
\newcounter{lemmacnt}
\newtheorem{lemma}[lemmacnt]{Lemma}
\newenvironment{proof}[1][Proof]{\textbf{#1. }}{\ \rule{0.5em}{0.5em}}
\newcommand{\R}{\mathbb{R}}
\newcommand{\C}{\mathbb{C}}
\newcommand{\PP}{\mathbb{P}}
\let\cl@part\relax \makeatother
\begin{document}

\newcommand{\rank}{\mathop{\rm rank}\nolimits}
\title{Numerical methods for rectangular multiparameter eigenvalue problems, with applications
\ch{to finding optimal} ARMA and LTI models}

\author[1]{Michiel E.~Hochstenbach}
\author[2]{Toma\v{z} Ko\v{s}ir}
\author[2]{Bor Plestenjak}

\affil[1]{\footnotesize Department of Mathematics and Computer Science, TU Eindhoven, PO Box 513, 5600 MB, The Netherlands}
\affil[2]{Faculty of Mathematics and Physics, University of Ljubljana, Jadranska 19, SI-1000 Ljubljana, Slovenia}
\renewcommand\Authands{ and }

\maketitle

\begin{abstract}
Standard multiparameter eigenvalue problems (MEPs)
are systems of $k\ge 2$ linear $k$-parameter square matrix pencils.
Recently, a new form \ch{of multiparameter eigenvalue problems} has emerged: a rectangular MEP (RMEP) with only one multivariate
rectangular matrix pencil, where we are looking for combinations of the parameters \ch{for which} the rank
of the pencil is not full.
Applications include finding the optimal least squares autoregressive moving average (ARMA) model
and the optimal least squares realization of autonomous linear time-invariant (LTI) dynamical system.
For linear and polynomial RMEPs, we give the number of solutions and show how these problems
can be solved numerically by a transformation into a
standard MEP. \ch{For the transformation we provide new linearizations for quadratic multivariate matrix polynomials with
a specific structure of monomials and consider mixed systems of rectangular and square multivariate matrix polynomials}.
\ch{This} numerical approach seems computationally considerably more attractive than the block Macaulay method, the only other currently available numerical method for polynomial RMEPs.\\

\noindent {\bf Keywords:} {Rectangular multiparameter eigenvalue problem,
\ch{standard} multiparameter eigenvalue problem, ARMA, LTI, block Macaulay matrix}\\
\end{abstract}
\section{Introduction}
The \ch{(standard)} \emph{multiparameter eigenvalue problem} (MEP) \cite{Atkinson}
has the form
\begin{equation}
 W_i(\blambda) \, \bx_i:=(V_{i0}+\lambda_1 V_{i1}+\cdots+\lambda_k V_{ik}) \, \bx_i = \zero,\quad i=1,\dots,k,
 \label{problem}
\end{equation}
where $V_{ij}\in\C^{n_i\times n_i}$ are \ch{square} matrices,
$\blambda=(\lambda_1,\dots,\lambda_k) \in \C^k$, and $\bx_i\in \C^{n_i}$ is nonzero.
In the generic case, problem \eqref{problem} has \chh{$n_1\cdots n_k$} \emph{eigenvalues} $\blambda\in \C^k$ that are roots of the
system of $k$ multivariate characteristic polynomials \ch{equations}
$\det(W_i(\blambda)) = 0$ for $i = 1, \dots, k$.
If $\blambda$ and nonzero vectors $\bx_1,\dots,\bx_k$ satisfy
\eqref{problem}, then the
tensor product $\bx = \bx_1 \otimes \cdots \otimes \bx_k$ is \ch{called} the corresponding \emph{eigenvector}.

A MEP \eqref{problem} is related to a system of generalized eigenvalue problems (GEPs)
\begin{equation} \label{drugi}
 \Delta_i \bz = \lambda_i \, \Delta_0 \bz, \qquad i = 1, \dots, k,
\end{equation}
where the \chh{$(n_1\cdots n_k) \times (n_1\cdots n_k)$} matrices
\[
\Delta_0 = \left|\begin{matrix}V_{11} & \cdots & V_{1k} \\[-0.5mm]
\vdots & & \vdots \\[-0.5mm]
V_{k1} & \cdots & V_{kk}\end{matrix}\right|_\otimes\ch{=\sum_{\sigma\in S_k}{\rm sgn}(\sigma) \, V_{1\sigma_1}\otimes V_{2\sigma_2}\otimes \cdots \otimes V_{k\sigma_k},}\]
\ch{where $\otimes$ denotes the Kronecker product}, and
\[
\Delta_i = -\ \left|\begin{matrix}V_{11} & \cdots & V_{1,i-1} & V_{10} & V_{1,i+1} & \cdots & V_{1k} \\[-0.5mm]
\vdots & & \vdots & \vdots & \vdots & & \vdots \\[-0.5mm]
V_{k1} & \cdots & V_{k,i-1} & V_{k0} & V_{k,i+1} & \cdots & V_{kk}\end{matrix}\right|_\otimes,\quad i=1,\ldots,k,
\]
are called \emph{operator determinants},
and $\bz = \bx_1 \otimes \cdots \otimes \bx_k$;
for \ch{more} details, see \cite{Atkinson}.
A generic MEP
\eqref{problem} is \emph{nonsingular}, which means that the corresponding
operator determinant $\Delta_0$ is nonsingular. In this case (see \cite{Atkinson}), the matrices $\Delta_0^{-1}\Delta_i$ for $i = 1,\dots,k$ commute and
the eigenvalues of \eqref{problem} and \eqref{drugi} agree.
By using this relation, a nonsingular MEP can
be numerically solved with the standard tools for GEPs; see, e.g., \cite{HKP}.
Several applications lead to singular MEPs, where $\Delta_0$ as well as all GEPs in \eqref{drugi} are singular; see, e.g, \cite{MP, KP}.
\ch{For singular MEPs}, we \ch{can} get the eigenvalues of \eqref{problem} from the common regular part of \eqref{drugi} \cite{MP}.

A generalization of \eqref{problem} are polynomial MEPs, where
$W_1,\dots,W_k$ are multivariate matrix polynomials. For instance, a \emph{quadratic two-parameter eigenvalue
problem} (quadratic 2EP) has the form
\begin{equation}
 (V_{i00}+\lambda V_{i10} + \mu V_{i01} + \lambda^2 V_{i20}+\lambda\mu V_{i11} + \mu^2 V_{i02}) \, \bx_i = \zero,\quad i=1,2,
 \label{q2ep}
\end{equation}
where $V_{ipq}\in \C^{n_i\times n_i}$.
A generic \ch{quadratic two-parameter eigenvalue} problem \eqref{q2ep} has $4n_1n_2$ eigenvalues $(\lambda, \mu)$ \cite{MP2}.

Recently, a new type of eigenvalue problems with $k\ge 2$ parameters has appeared. A generic form is
\begin{equation} \label{rectmep}
M({\blambda}) \, \bx := \Big(\sum_{\bomega} {\blambda}^{\bomega} A_{\bomega} \Big) \, \bx = \zero,
\end{equation}
where $\bomega = (\omega_1,\dots,\omega_k)$ is a multi-index, $\blambda^\bomega = \lambda_1^{\omega_1}\cdots \lambda_k^{\omega_k}$, and $\bx \in \C^n$ is nonzero;
again a $k$-tuple $\blambda=(\lambda_1,\dots,\lambda_k) \in \C^k$ is called an eigenvalue and $\bx$ is the corresponding eigenvector.
The key properties of this problem are:
\vspace{-2mm}
\begin{enumerate} \itemsep=-1mm
\item[a)] there is just \emph{one equation};
\item[b)] $A_\bomega=A_{\omega_1,\dots,\omega_k}$ are $(n+k-1) \times n$ \emph{rectangular} matrices over $\C$.
\end{enumerate}
We assume that the matrices are such that the normal rank of $M$, defined as
\[
{\rm nrank}(M):= \max_{\blambda\in \C^k} \, \rank(M(\blambda)),
\]
is full, i.e., ${\rm nrank}(M) = n$. Then $\blambda\in \C^k$ is an eigenvalue of problem \eqref{rectmep}
if $\rank(M(\blambda))<n$.
\ch{If the normal rank of $M$ is not full, we can still define
eigenvalues as points where $\rank(M(\blambda))<{\rm nrank}(M)$ (see, e.g., \cite{Khazanov}) but
such problems, which are more difficult to analyze and solve numerically, are
outside the scope of this paper.}

To distinguish it from \eqref{problem} and
\eqref{q2ep}, we call \eqref{rectmep} a \emph{rectangular multiparameter eigenvalue problem} (RMEP). The degree $d$ of \eqref{rectmep} corresponds
to the highest total degree \ch{$|\bomega|=\omega_1+\cdots+\omega_k$} of all its monomials $\blambda^\bomega$.
We assume that problem \eqref{rectmep} is such that is has finitely many solutions.
It does not seem easy to give necessary or sufficient conditions for this property;
heuristically, this property is expected to hold when the matrices $A_\bomega$ are ``generic''.
We give some partial results in \Cref{sec:linrmep} for the linear case $d = 1$, and in \Cref{sec:polyrmep} we show that
a generic polynomial RMEP of degree $d$ has $d^k\binom{n+k-1}{k}$ eigenvalues.

\ch{The first detailed study of RMEPs in a polynomial form \eqref{rectmep}
with more general $m\times n$ matrices, where $m\ge n$,
was done by Khazanov in \cite{Khazanov},
where the spectrum of such problems is defined and analyzed, but no numerical methods are considered.
Shapiro and Shapiro \cite{Shapiro} studied linear RMEPs},
where $M(\blambda)$ is a linear multivariate matrix pencil and the problem has the form
\begin{equation} \label{shap}
M(\blambda) \, \bx:=(A + \lambda_1 B_1 + \cdots +\lambda_k B_k) \, \bx = \zero,
\end{equation}
where $A,B_1,\dots,B_k\in \C^{(n+k-1) \times n}$.
It is shown in \cite{Shapiro} that a generic problem \eqref{shap} has $\binom{n+k-1}{k}$ eigenvalues, but again
no numerical methods are considered.
Before that, problem \eqref{shap} with $m\times n$ matrices, where $m\ge n$,
appeared under the name \ch{of} the \emph{eigentuple-eigenvector} problem
in \cite{BlumCurtis, BlumGeltner}, where
a gradient method is explored, and as a rank-reducing perturbation problem
in \cite{WDC}, where a Newton type approach is used 
to minimize the smallest singular value of $M(\blambda)$.
RMEPs in polynomial form \eqref{rectmep} have appeared in recent papers by De Moor \cite{DeMoor_LTI, DeMoor_CIS} and Vermeersch and De Moor \cite{DeMoor_ARMA, DeMoor_LAA}, \ch{see also \cite{DeMoor_SISC}.}
Applications include finding the optimal autoregressive moving average (ARMA) model and the optimal 
realization of autonomous linear time-invariant (LTI) dynamical system; \chh{these two identification problems will be the topic
of Sections~\ref{sec:ARMA} and \ref{sec:LTI}, respectively.}
The numerical methods for such problems in \cite{DeMoor_LTI, DeMoor_CIS, DeMoor_ARMA, DeMoor_LAA, DeMoor_SISC} use block Macaulay matrices which tend to be very large.
RMEPs of the form \eqref{shap} with applications in $\calh_2$-optimal model reduction have recently been studied by Alsubaie \cite{ALS},
who has derived a novel numerical method based on a reduction to a compressed MEP of the form \eqref{problem}.

We note that in the recent \cite{Tre21}, a combination of rectangular one-parameter matrix pencils and projections is considered; see also \cite{Tre02} for an earlier study.
These problems are not related as
problem \eqref{shap} for $k=1$ is equal to a GEP with square matrices and is different from the problem considered in \cite{Tre21}.

\ch{In this paper we show in \Cref{mepPB} that a generic linear RMEP \eqref{shap} can be transformed into a nonsingular MEP
of type \eqref{problem} by using $k$ deterministic or random projections (full-rank $n \times (n+k-1)$ operators in this context)
so that one can exploit available numerical methods for these MEPs. Based on this transformation we
provide an alternative derivation of
the compression technique by Alsubaie \cite{ALS}. With this technique we obtain from \eqref{shap} a system of GEPs of
substantially reduced size compared to the $\Delta$-matrices in \eqref{drugi}.
This approach yields (much) smaller matrices than the block Macaulay algorithm;
\ch{the} reduction factor is $n+k-1$.

For polynomial RMEPs \eqref{rectmep} we provide in \Cref{lem:fin_sol} the number of solutions, which generalizes the result
for the linear case from \cite{Shapiro}. We give two numerical approaches for polynomial RMEPs that we demonstrate on
the quadratic case. The first approach is based on a transformation
to a polynomial MEP by $k$ random projections, its advantage is that then one can use methods for polynomial MEPs that
work with the matrices $V_{ij}$. The second approach is to use generalized compression technique by Alsubaie that
leads to a system of singular GEPs. We apply this method, where in addition we provide new linearizations
for quadratic multivariate polynomial matrices
with a specific structure of monomials, to the RMEPs related to the computation of the optimal parameters of ARMA and LTI models.
We show that the new method is much more efficient than the block Macaulay method, which is the only other available numerical method for
RMEPs. The matrices are reduced by a factor ${\cal O}(N)$, where $N$ is the size of the sample.}

\ch{{\bf Outline.}} The rest of this paper has been organized as follows.
We show the basic idea of transforming an RMEP to a MEP in \Cref{sec:idea}.
In \Cref{sec:linrmep}, a theory that connects linear RMEPs to MEPs is provided first, after which a corresponding numerical method for linear RMEPs is proposed.
In \Cref{subs:compress} we present a compression technique from Alsubaie~\cite{ALS} that reduces the size of the problem, and
in \Cref{subs:Macaulay} we review the block Macaulay approach that is used to solve RMEPs in \cite{DeMoor_LTI, DeMoor_CIS, DeMoor_ARMA, DeMoor_LAA}.
In \Cref{sec:polyrmep} we discuss polynomial RMEPs and provide \ch{a proof on} the number of solutions in the generic case, while in \Cref{subs:num_polyrect}
we show numerical methods for such problems on the case of quadratic two-parameter RMEPs.
In \Cref{sec:ARMA} and \Cref{sec:LTI} we present in detail two particular cases of polynomial RMEPs related to ARMA and LTI models.
Relevant numerical examples are included in \ch{both} sections.
We end with conclusions in \Cref{sec:concl}.

\ch{{\bf Notation.}} Throughout the paper, $\|\cdot\|$ denotes the 2-norm. \ch{We denote tuples and vectors by boldface lowercase letters.}

\section{From rectangular to standard MEP} We show the basic idea in the following two examples.
\label{sec:idea}

\begin{example} \label[example]{ex:ex1}\rm We consider the problem
$(A+\lambda B+\mu C) \, \bx = \zero$ of type \eqref{shap}, where
\begin{equation}
A = \smtxa{cc}{1 & 2\\ 3 & 4 \\ 3 & 1}, \quad
B = \smtxa{cc}{1 & 3\\ 5 & 1 \\ 1 & 4}, \quad \textrm{and}\quad
C = \smtxa{cc}{4 & 1\\ 1 & 3 \\ 4 & 1}. \label{ex1:rect}
\end{equation}
From \cite[Lemma~1]{Shapiro}, we know that this problem has three eigenvalues.
To compute them, we multiply \eqref{ex1:rect} by
matrices $P_1$ and $P_2$ of size $2\times 3$,
which
results in a 2EP with matrices of size $2\times 2$ of the form
\begin{equation}
(P_iA+ \lambda \, P_iB + \mu \, P_iC) \, \bx_i = \zero, \quad i=1,2. \label{ex1:2ep}
\end{equation}
For generic $P_1$ and $P_2$ we get a nonsingular (see Section \ref{sec:linrmep} for the proof) 2EP \eqref{ex1:2ep} which has four eigenvalues $(\lambda,\mu)$ with
eigenvectors $\bx_1 \otimes \bx_2$.
Clearly, if $(\lambda,\mu)$ and $\bx \ne \zero$ solve \eqref{ex1:rect} then
$(\lambda,\mu)$ and $\bx \otimes \bx$ solve \eqref{ex1:2ep}. Since \eqref{ex1:2ep} has one more solution than \eqref{ex1:rect}, we get one
eigenvalue of \eqref{ex1:2ep}, where $\bx_1$ and $\bx_2$ are not colinear, that \chh{needs} to be ignored.

The elements of $P_1$ and $P_2$ may be drawn from a standard normal distribution; this technique is known as \emph{randomized sketching} and this idea is new in the MEP context. Alternatively, we can use deterministic selection matrices; the main goal \ch{in both cases} is to get a nonsingular 2EP if possible.
For \ch{this} particular problem,
the matrices $P_1$ and $P_2$ may be \emph{row selection matrices}, that is, their rows are canonical basis vectors.
As an example, we take $P_1= \smtxa{ccc}{1 & 0 & 0 \\ 0 & 1 & 0}$ and
$P_2= \smtxa{ccc}{0 & 1 & 0 \\ 0 & 0 & 1}$, i.e., we form \eqref{ex1:2ep} by omitting the last and the first row of \eqref{ex1:rect} respectively.
The 2EP \eqref{ex1:2ep} is
\begin{align}
 \left( \smtxa{ccc}{1 & 2\\ 3 & 4}
 + \lambda \, \smtxa{ccc}{1 & 3\\ 5 & 1}
 + \mu \, \smtxa{ccc}{4 & 1\\ 1 & 3}\right) \bx_1& = \zero,\nonumber\\[-2mm]
 \label{ex1:2epB} &\\[-2mm]
 \left( \smtxa{ccc}{3 & 4 \\ 3 & 1}
 + \lambda \, \smtxa{cc}{5 & 1 \\ 1 & 4}
 + \mu \, \smtxa{ccc}{1 & 3 \\ 4 & 1}\right) \bx_2& = \zero, \nonumber
\end{align}
with eigenvalues (to four decimal places)
$(2.6393, 3.0435)$,
$(-1.3577, 0.4365)$,
$(0.4553, -1.8007)$, and $(-0.3571, -1.2143)$,
where the first three eigenvalues are solutions of \eqref{ex1:rect}. In \Cref{subs:compress} we will show how
to reduce the size of the problem by using the compression technique from \cite{ALS} to get only solutions of the given linear RMEP.
\end{example}

\begin{example} \label{example2} \rm
The first-order ARMA(1,1) model (for more details see \ch{\cite{DeMoor_ARMA} and} \Cref{sec:ARMA}) leads to a quadratic rectangular 2EP (quadratic R2EP) of the form
\begin{equation} \label{ex2:armax}
(A+\lambda B + \mu C + \mu^2 D) \, \bx = \zero,
\end{equation}
where $A,B,C,D$ are $(3N-1) \times (3N-2)$ matrices and $N$ is the number of given data points.
Analogously to \Cref{ex:ex1}, we multiply the R2EP with random matrices $P_1$ and $P_2$ of
size $(3N-2) \times (3N-1)$ to obtain a quadratic 2EP
\[
 (P_iA+ \lambda \, P_iB + \mu \, P_iC + \mu^2 \, P_iD) \, \bx_i = \zero,\quad i=1,2.
\]
We can linearize the above problem as a singular 2EP; see \cite{HMP, MP}. For this particular example one option is to apply the linearization
\[
 \left(\mtxb{P_iA & P_iC \\ 0 & -I} +
 \lambda \, \mtxb{P_iB & 0 \\ 0 & 0}
 + \mu \, \mtxb{ 0 & P_iD \\ I & 0}\right)
 \mtxb{\bx_i \\ \mu \bx_i}= \zero, \quad i=1,2.
 \]
This singular 2EP may be solved with a staircase type algorithm from \cite{MP} applied to
$\Delta$-matrices of size $4(3N-2)^2\times 4(3N-2)^2$. We will show in \Cref{sec:ARMA} that \ch{this} size can be
further reduced by transforming the problem into a three-parameter MEP and by using the compression technique from Alsubaie \cite{ALS}.
The dimension \ch{still} quickly increases with $N$, but the  matrices are  considerably smaller than the block Macaulay matrices
used to solve such problems in \cite{DeMoor_LTI, DeMoor_CIS, DeMoor_ARMA, DeMoor_LAA}; \ch{see \Cref{tab:arma11}}.
\end{example}

\section{Linear rectangular MEPs} \label{sec:linrmep}
We will \ch{first} study the linear RMEP \eqref{shap}, a special case of problem \eqref{rectmep} with degree \ch{$d$ equal to} one.
We assume that the problem has full normal rank.
\ch{In \Cref{mepPB} we will show
that the approach that we demonstrated in \Cref{ex:ex1} can be applied to a generic
linear RMEP, and later we will provide a numerical method based on this theoretical results.}

The following result gives a sufficient condition on $B_1,\dots,B_k$
so that problem \eqref{shap} has finitely many eigenvalues for an arbitrary matrix $A$.
It is not hard to see that a necessary condition for the hypothesis in the following lemma is that all $B_i$ are of full rank $n$,
and that the $B_i$ are linearly independent in the space $\C^{(n+k-1) \times n}$.

\begin{lemma}[{\cite[Lemma~1]{Shapiro}}]\label[lemma]{lem:shapiro}
Let matrices $B_1,\dots,B_k\in \C^{(n+k-1) \times n}$ be such that for all $\blambda \ne \zero$,
${\rm rank}( \lambda_1 B_1+\cdots+\lambda_k B_k) = n$.
Then for each $A\in \C^{(n+k-1) \times n}$ the linear RMEP \eqref{shap} has exactly $\binom{n+k-1}{k}$ eigenvalues counting multiplicities.
\end{lemma}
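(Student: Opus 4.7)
My plan is to count eigenvalues via a Bezout-style intersection number on the product $\PP^k \times \PP^{n-1}$. Homogenize by introducing an auxiliary variable $\lambda_0$ and setting $\wt M(\lambda_0, \lambda_1, \ldots, \lambda_k) = \lambda_0 A + \lambda_1 B_1 + \cdots + \lambda_k B_k$. Consider the incidence variety
\[
I = \{([\lambda_0, \ldots, \lambda_k], [\bx]) \in \PP^k \times \PP^{n-1} : \wt M(\lambda_0, \ldots, \lambda_k)\, \bx = \zero\}.
\]
The defining relations are $n+k-1$ bihomogeneous forms of bidegree $(1,1)$, so $I$ is the zero scheme of a section of $E = \calo(1,1)^{\oplus (n+k-1)}$. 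Since $\dim(\PP^k \times \PP^{n-1}) = n+k-1$ matches the rank of $E$, the virtual dimension is $0$, and the cohomology class of $I$ equals $c_{n+k-1}(E) = (h_1+h_2)^{n+k-1}$, with $h_1, h_2$ the hyperplane classes on the two factors.

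First I would exploit the hypothesis on $B_1, \ldots, B_k$ to rule out solutions at infinity. If $([0, \lambda_1, \ldots, \lambda_k], [\bx]) \in I$ with $(\lambda_1, \ldots, \lambda_k) \ne \zero$, then $\sum_{i=1}^k \lambda_i B_i$ would have a nonzero kernel vector, contradicting the full rank assumption. Hence every point of $I$ lies above the affine chart $\lambda_0 \ne 0$, so $\pi_1 : I \to \PP^k$ lands in the affine eigenvalue set of the RMEP; at a simple eigenvalue $\blambda$, where $\rank \wt M(\blambda) = n-1$, the fiber is a single reduced point corresponding to the one-dimensional kernel of $\wt M(\blambda)$.

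Next, I would compute the intersection number by pairing $(h_1+h_2)^{n+k-1}$ with the fundamental class of $\PP^k \times \PP^{n-1}$. Expanding by the binomial theorem and extracting the coefficient of $h_1^k h_2^{n-1}$, the only top-degree monomial on the product, yields
\[
\binom{n+k-1}{k},
\]
which establishes the claim in the generic situation where all eigenvalues are simple and $I$ is reduced.

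The main obstacle is extending this to arbitrary $A$ with the correct multiplicities, since $I$ may fail to be reduced, or even become positive-dimensional, at eigenvalues where the rank of $\wt M$ drops by more than one. I would handle this via conservation of number: for a generic perturbation $A_\varepsilon = A + \varepsilon A_0$, all eigenvalues become simple with rank drop exactly one, so the transversal count is $\binom{n+k-1}{k}$; as $\varepsilon \to 0$ the total length is preserved because $I$ is proper over $\PP^k$ (it is contained in $\{\lambda_0 \ne 0\} \times \PP^{n-1}$, as already ensured) and no intersection can escape to infinity, so the local multiplicities at the eigenvalues of $A$ sum to $\binom{n+k-1}{k}$.
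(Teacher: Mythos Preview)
The paper does not prove this lemma; it is quoted verbatim from \cite{Shapiro} and used as input to later results. Your incidence-variety argument on $\PP^k\times\PP^{n-1}$ is correct and self-contained: the Chern-class count $(h_1+h_2)^{n+k-1}\mapsto\binom{n+k-1}{k}$ is right, and the crucial use of the hypothesis---that $\lambda_0=0$ forces $\sum\lambda_iB_i$ to drop rank, which is forbidden---is exactly what pins $I$ to the affine chart and makes the count valid for every $A$, since $\pi_1(I)$ is then a closed subvariety of $\PP^k$ lying in an affine open, hence finite.

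For comparison, the paper's own argument for the polynomial generalization in \Cref{lem:fin_sol} works on the target side instead: it views the pencil as a map $\blambda\mapsto A+\sum\lambda_iB_i$ into the matrix space $\calm=\C^{(n+k-1)\times n}$ and intersects the image (a $k$-dimensional variety of degree $d^k$, here $d=1$) with the determinantal locus $\calm_1$ of rank-deficient matrices, which has codimension $k$ and degree $\binom{n+k-1}{k}$; B\'ezout then gives the count. Your approach resolves $\calm_1$ by remembering the kernel direction, trading the degree of a determinantal variety for a top Chern class. The determinantal route is shorter and extends uniformly to higher degree $d$; your route makes the multiplicity structure more transparent when the kernel is one-dimensional, but---as you correctly flag---needs the deformation (or excess-intersection) step when some eigenvalue has kernel of dimension $\ge 2$ and $I$ picks up a positive-dimensional fiber. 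Your conservation-of-number argument is the right fix; just be explicit that what is preserved is the degree of the $0$-cycle class $c_{n+k-1}(E)\cap[\PP^k\times\PP^{n-1}]$, not a scheme-theoretic length, and that the push-forward to $\PP^k$ then defines the multiplicity of each eigenvalue.
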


A set $\cals \subseteq \C^r$ is called \emph{algebraic \ch{variety}} if it is the set of common zeros of finitely many
complex polynomials in $r$ variables, and \ch{$S$} is called \emph{proper} if $\cals \ne \C^r$. A set $\Omega\subset \C^r$
is said to be \emph{generic} if its complement is contained in a proper algebraic \ch{variety}.
\ch{We say that a property ${\cal P}$ holds generically if
there exists a generic set $\Omega$ such that
${\cal P}$ holds for all elements of $\Omega$.}
\ch{In this sense} we can show that the condition in \Cref{lem:shapiro} is satisfied for a generic set of matrices $B_1,\dots,B_k$.

\ch{\begin{theorem}\label{mepPB}
There exists a generic set $\Omega\subset (\C^{(n+k-1) \times n})^k$ with the following properties:
\begin{enumerate}
    \item[1)] For each $(B_1,\dots,B_k) \in \Omega$ there exists a generic set
        $\Theta\subset (\C^{n\times (n+k-1)})^k$ such that the operator determinant
        \begin{equation} \label{eq:delta0_prop4}
            \Delta_0 = \left|\begin{matrix} P_1B_1 & \cdots & P_1 B_k \\[-0.5mm]
            \vdots & & \vdots \\[-0.5mm]
            P_k B_1 & \cdots & P_k B_k\end{matrix}\right|_\otimes
        \end{equation}
        is nonsingular for all $(P_1,\dots,P_k) \in \Theta$;
    \item[2)] For each $(B_1,\dots,B_k) \in \Omega$ it holds that
        ${\rm rank}(\lambda_1 B_1+\cdots+\lambda_k B_k) = n$ for all $\blambda \ne \zero$.
\end{enumerate}
\end{theorem}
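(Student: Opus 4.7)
I would take $\Omega$ to be the intersection of two nonempty Zariski-open subsets $\Omega_1,\Omega_2 \subset (\C^{(n+k-1)\times n})^k$. Any nonempty Zariski-open set is generic, so $\Omega$ is generic, and parts~1 and~2 can be handled separately on $\Omega_1$ and $\Omega_2$.

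For part~2, the set
\[
\mathcal{B} := \{(B_1,\ldots,B_k) : \rank(\lambda_1 B_1+\cdots+\lambda_k B_k) < n \text{ for some } \blambda\ne\zero\}
\]
is the image under the proper projection $\PP^{k-1}\times(\C^{(n+k-1)\times n})^k\to(\C^{(n+k-1)\times n})^k$ of the closed incidence variety cut out by the $n\times n$ minors of $\sum_i\lambda_i B_i$ (homogeneous in $\blambda$, so well defined on $\PP^{k-1}$). Properness of projective space then makes $\mathcal{B}$ Zariski-closed. To see $\mathcal{B}$ is proper, I would exhibit the staircase example $B_i^*$ defined by $(B_i^*)_{i+j-1,\,j}=1$, with all other entries zero: the pencil $\sum_i \lambda_i B_i^*$ is banded with top $n\times n$ minor $\lambda_1^n$ and bottom $n\times n$ minor $\lambda_k^n$, and straightforward inspection of the contiguous intermediate minors forces $\rank = n$ for all $\blambda\ne\zero$. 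Define $\Omega_2 := (\C^{(n+k-1)\times n})^k\setminus\mathcal{B}$.

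For part~1, $\det\Delta_0$ is a polynomial in the entries of $(P_1,\ldots,P_k)$, so $\Theta_B:=\{(P_1,\ldots,P_k) : \det\Delta_0\ne 0\}$ is Zariski-open and generic whenever nonempty. It therefore suffices to show $\det\Delta_0$, viewed jointly as a polynomial on $(B,P)$-space, is not identically zero; then $\Omega_1:=\{B : \Theta_B\ne\emptyset\}$ is a nonempty Zariski-open subset. Using the staircase $B_i^*$ above together with the row-selection matrix $P_i^*$ extracting rows $i,i+1,\ldots,i+n-1$, a direct computation shows that $P_i^* B_j^*$ is the $n\times n$ matrix sending $e_c\mapsto e_{c+j-i}$ when $1\le c+j-i\le n$ and to $\zero$ otherwise; in particular $P_i^* B_i^* = I_n$. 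Substituting gives
\[
\Delta_0 = \sum_{\sigma\in S_k}{\rm sgn}(\sigma)\bigotimes_{i=1}^k P_i^* B_{\sigma(i)}^* = I_{n^k} + E,
\]
where $E$ is the sum over $\sigma\ne{\rm id}$. On the tensor basis $e_{c_1}\otimes\cdots\otimes e_{c_k}$, the $\sigma$-summand maps $c_i\mapsto c_i+\sigma(i)-i$ when in range, changing the grading $f(c_1,\ldots,c_k):=\sum_{i=1}^k i\,c_i$ by $\sum_i i\,\sigma(i) - \sum_i i^2$, strictly negative for every $\sigma\ne{\rm id}$ by the rearrangement inequality. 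Hence $E$ strictly decreases $f$, so $E$ is nilpotent and $\Delta_0 = I_{n^k}+E$ is unipotent, in particular invertible. Setting $\Omega:=\Omega_1\cap\Omega_2$ completes the construction.

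The step I anticipate as the main obstacle is the nilpotency of $E$: it relies on the specific triangular structure coming from the staircase-plus-row-selection choice, and less structured projections might allow non-identity permutations to preserve $f$ and destroy unipotency. The rearrangement inequality is precisely what converts the combinatorial statement ``$\sigma\ne{\rm id}$'' into a uniform strict monotonicity of the grading.
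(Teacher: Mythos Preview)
Your argument is correct. The explicit example you use (staircase $B_i^*$ together with the row-selection $P_i^*$) is exactly the paper's choice $B_i = P_i^T$ with $P_i = [0_{n,i-1}\ I_n\ 0_{n,k-i}]$, and your unipotency argument via the grading $f(c_1,\ldots,c_k)=\sum_i i\,c_i$ and the rearrangement inequality is a reformulation of the paper's one-line claim that ``$\Delta_0$ is lower triangular with ones on the diagonal'': ordering the tensor basis by $f$ (ties broken arbitrarily) is precisely what makes $E$ strictly lower triangular.

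The genuine difference is in how you handle part~2). You treat it independently: you show $\mathcal{B}$ is closed by invoking properness of the projection $\PP^{k-1}\times X\to X$, and then rule out the staircase example from $\mathcal{B}$ directly. (The minor-inspection you sketch is cleaner if phrased as: $\sum_i\lambda_i B_i^*$ represents multiplication by the polynomial $\lambda_1+\lambda_2 t+\cdots+\lambda_k t^{k-1}$ on coefficient vectors, which is injective over the integral domain $\C[t]$ whenever $\blambda\ne\zero$.) The paper instead \emph{derives} part~2) from part~1) with no extra algebraic geometry: for $(B_1,\ldots,B_k)\in\Omega$ pick $(P_1,\ldots,P_k)$ making $\Delta_0$ nonsingular; if $(\sum_i\lambda_i B_i)\bx=\zero$ with $\bx\ne\zero$, then $(\sum_j\lambda_j P_iB_j)\bx=\zero$ for every $i$, which is the MEP \eqref{problem} with all $V_{i0}=0$. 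In that case every $\Delta_i$ (for $i\ge 1$) has a zero column and vanishes, so the relation $\Delta_i\bz=\lambda_i\Delta_0\bz$ with $\bz=\bx^{\otimes k}$ and $\Delta_0$ invertible forces $\blambda=\zero$. This lets the paper take a single $\Omega$ (your $\Omega_1$) rather than an intersection, and avoids the properness machinery altogether. Your route is more modular and self-contained; the paper's is shorter and makes direct use of the MEP structure that is the point of the theorem.
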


\begin{proof} We define
\begin{align*}
\Omega:=\big\{&(B_1,\dots,B_k)\in(\C^{(n+k-1) \times n})^k:\ \textrm{there exists }
(P_1,\dots,P_k)\in (\C^{n\times (n+k-1)})^k\ \textrm{such that}\\
& \eqref{eq:delta0_prop4}\ \textrm{is nonsingular}\big\}.
\end{align*}
Let us show that $\Omega$ is a generic set.
We fix the $n \times (n+k-1)$ matrices $P_1, \dots, P_k$ in \eqref{eq:delta0_prop4} to matrices
of the special form, where $P_j = [0_{n,j-1} \ \, I_n \ \, 0_{n,k-j}]$ is an $n \times n$ identity matrix
augmented by $j-1$ zero columns at the left and $k-j$ zero columns at the right.
Then $\det(\Delta_0)$ is a polynomial in the elements of $B_1,\dots,B_k$ and 
$\det(\Delta_0) = 0$ defines an algebraic \ch{variety}.
This set is proper since if we take $B_i = P_i^T$, $i = 1,\dots,k$, then it is easy to see that the corresponding
$\Delta_0$ is lower triangular with ones on the diagonal and $\det(\Delta_0) = 1$.
It follows that
\begin{align*}
\Omega_0:=\big\{&(B_1,\dots,B_k)\in(\C^{(n+k-1) \times n})^k:
\textrm{\eqref{eq:delta0_prop4} is nonsingular for}\
P_j = [0_{n,j-1} \ \, I_n \ \, 0_{n,k-j}],\\
& j=1,\ldots,k\big\}
\end{align*}
is a generic set and then $\Omega$ is generic due to $\Omega_0\subset \Omega$.

For 1), we take $(B_1,\dots,B_k) \in \Omega$, fix $B_1,\ldots,B_k$
in \eqref{eq:delta0_prop4} in a similar way as above, and consider $\det(\Delta_0)$ as a polynomial in the elements of $P_1,\dots,P_k$. It follows from the definition of $\Omega$ that there exist
matrices $P_1,\dots,P_k\in\C^{n\times (n+k-1)}$ such that $\det(\Delta_0)\ne 0$, therefore the
set of $\det(\Delta_0)=0$ defines a proper algebraic \ch{variety} and
there exists
a generic set $\Theta\subset (\C^{n\times (n+k-1)})^k$ that satisfies 1).

For 2), let $(B_1,\dots,B_k) \in \Omega$ and let
$\blambda\in \C^k$ and a nonzero $\bx$ be such that
$(\lambda_1B_1+\cdots+\lambda_kB_k) \, \bx= \zero$. We take
$(P_1,\dots,P_k)\in (\C^{n\times (n+k-1)})^k$ such that \eqref{eq:delta0_prop4} is nonsingular.
Then
\[
(\lambda_1 \, P_iB_1 + \cdots + \lambda_k \, P_iB_k) \, \bx_i = \zero,\quad i=1,\dots,k,
\]
is a nonsingular MEP of type \eqref{problem}.
From the associated system \eqref{drugi}, we get that $\lambda_i\,\Delta_0(\bx \otimes \cdots \otimes\bx) = \zero$ for $i = 1,\dots,k$.
Since $\Delta_0$ is nonsingular, it follows that $\blambda=\zero$, so indeed
 ${\rm rank}(\lambda_1 B_1+\cdots+\lambda_k B_k) = n$ for all $\blambda \ne \zero$.
\end{proof}
}

\ch{If follows from \Cref{mepPB} that for a generic choice of
matrices $B_1,\ldots,B_k$ the assumptions in \Cref{lem:shapiro} are satisfied and therefore}
a generic problem \eqref{shap} has \ch{exactly} $\binom{n+k-1}{k}$ solutions.
If $B_1,\dots,B_k$ and $P_1, \dots, P_k$ are such that the MEP
 \begin{equation} \label{eq:mepkalg}
 (P_iA+\lambda_1 \, P_iB_1+\cdots+\lambda_k \, P_iB_k) \, \bx_i = \zero,\quad i=1,\dots,k,
 \end{equation}
is nonsingular, then \eqref{eq:mepkalg}
has $n^k$ eigenvalues, but only
$\binom{n+k-1}{k}$ of them 
are 
solutions of \eqref{shap}.
On the other hand, if $\blambda=(\lambda_1,\dots,\lambda_k)$ is an eigenvalue of \eqref{shap} with an eigenvector $\bx$, then
$\blambda$ is an eigenvalue of \eqref{eq:mepkalg}, $\bz:= \bx \otimes \cdots \otimes \bx$ is the corresponding eigenvector, and
\ch{$\Delta_0^{-1}\Delta_i \bz = \lambda_i \bz$ for $i= 1,\dots,k$.}
Based on \Cref{mepPB} we derive the following numerical algorithm.\eject

\noindent\vrule height 0pt depth 0.5pt width \textwidth \\[-0.5mm]
{\bf Algorithm~1: Eigenvalues of a linear rectangular $k$-parameter MEP \eqref{shap}} \\[-3mm]
\vrule height 0pt depth 0.3pt width \textwidth \\
{\bf Input:} $(n+k-1) \times n$ matrices $A$ and $B_1,\dots,B_k$; \ch{tolerance {\sf tol} (default: $10^{-10}$)} \\
{\bf Output:} $\binom{n+k-1}{k}$ eigenvalues $\blambda^{(j)} = (\lambda_1^{(j)},\dots,\lambda_k^{(j)})$ \\
\begin{tabular}{ll}
{\footnotesize 1:} & Select $n \times (n+k-1)$ matrices $P_1, \dots, P_k$ with orthonormal rows. \\
{\footnotesize 2:} & Solve the $k$-parameter eigenvalue problem \eqref{eq:mepkalg} involving $n \times n$ matrices and obtain $n^k$\\ & eigenvalues $\blambda^{(1)}, \dots, \blambda^{(n^k)}$.\\
{\footnotesize 3:} & Keep only the
eigenvalues $\blambda^{(j)}$ \ch{for which}\\
& $\sigma_{n}(A+\lambda^{(j)}_1 B_1+\cdots+\lambda^{(j)}_kB_k)< {\sf tol} \cdot \|A+\lambda^{(j)}_1 B_1+\cdots+\lambda^{(j)}_kB_k\|$.
\end{tabular} \\[0.5mm]
\vrule height 0pt depth 0.5pt width \textwidth

In Algorithm~1, the projection matrices $P_1, \dots, P_k$ may be deterministic (provided as argument to the method), or chosen randomly.
For moderate values of $n^k$ we can solve the MEP \eqref{eq:mepkalg} in Step~2 by solving the corresponding system \eqref{drugi} of
GEPs with matrices $\Delta_0,\dots,\Delta_k$ of size
$n^k\times n^k$.
Alternatively, if $n^k$ is large and $k=2$ or $k=3$, then we can
apply an iterative subspace method (see, e.g., \cite{HKP, MeerP, HMMP}), where we do not have to form the $\Delta$-matrices explicitly,
to compute a subset of eigenvalues close to a given target.
For large values of $n^k$, in particular for large $k$, a method that might solve the MEP \eqref{eq:mepkalg} is
a homotopy method \cite{Fiber}.
For a large $k$ only a modest portion of the solutions to \eqref{eq:mepkalg} also is a solution to \eqref{shap}.
We explore this \ch{behavior} in \Cref{ex:nkrand}.

We remark that the complexity of the rank test in Step~3 is negligible compared to Step~2. If we need a nonzero
vector $\bx$ from \eqref{shap}, we can obtain it as a side result in Step~3 by taking the right singular
vector of $A+\lambda^{(j)}_1 B_1+\cdots+\lambda^{(j)}_kB_k$ that corresponds to the smallest singular value.

\begin{example} \label[example]{ex:nkrand} \rm
The next table shows numerical experiments for various values of $n$ and $k$, where matrices
$A$, $B_1$, $\dots$, $B_k$ are selected as random matrices using {\tt randn(n+k-1,n)} in Matlab. We use the
function {\tt multipareig} from package {\tt MultiParEig} \cite{MultiParEig} to solve the MEP in Step~2.

\begin{table}[htb!]
\centering
\caption{Results from Algorithm~1 \ch{on \eqref{shap}} with random $(n+k-1) \times n$ matrices.}
\vspace{1mm}
{\footnotesize \begin{tabular}{cr|cc|cc} \hline \rule{0pt}{2.2ex}%
$n$ & $k$ & Size $\Delta$ \ch{= \# Eigs \eqref{eq:mepkalg}} & \# Eigs \eqref{shap} & Max.~$\sigma_{n}$(eigs) & Min.~$\sigma_{n}$(non-eigs) \\[0.5mm]
\hline \rule{0pt}{2.3ex}%
$10$ & $\ph{1}2$ & $\ph{2}100$ & $\ph{12}55$ & $3.1\cdot 10^{-13}$ & $5.5\cdot 10^{-3}$ \\
$50$ & $\ph{1}2$ & $2500$ & $1275$ & $1.2\cdot 10^{-11}$ & $5.0\cdot 10^{-4}$ \\
$10$ & $\ph{1}3$ & $1000$ & $\ph{1}220$ & $5.0\cdot 10^{-11}$ & $4.4\cdot 10^{-3}$ \\
$\ph{1}7$ & $\ph{1}4$ & $2401$ & $\ph{1}210$ & $2.4\cdot 10^{-11}$ & $1.7\cdot 10^{-3}$ \\
$\ph{1}4$ & $\ph{1}5$ & $1024$ & $\ph{12}56$ & $4.1\cdot 10^{-12}$ & $2.4\cdot 10^{-2}$ \\
$\ph{1}2$ & $10$ & $1024$ & $\ph{12}11$ & $6.2\cdot 10^{-10}$ & $7.8\cdot 10^{-3}$ \\ \hline
\end{tabular}}
\end{table}

The maximal value of
$\sigma_n(A+\lambda^{(j)}_1 B_1+\cdots+\lambda^{(j)}_kB_k)$ for all eigenvalues $\blambda^{(j)}$ that satisfy the rank drop in step~3 is displayed in the fifth column.
Similarly, in the last column is the
minimal singular value for all eigenvalues that do not satisfy the criterion.
We can see a clear gap between the eigenvalues of \eqref{eq:mepkalg} that are also eigenvalues
of \eqref{shap} and those that are not.

The ratio between the number of solutions
of \eqref{shap} and \eqref{eq:mepkalg} decreases
 with increasing $k$ and only a small portion of eigenvalues of \eqref{eq:mepkalg} are
also eigenvalues of \eqref{shap}. The transformation of \eqref{shap} into \eqref{eq:mepkalg} can thus dramatically increase the
size of the problem whose solutions are
candidates for the solutions of \eqref{shap}.
As we will see
in \Cref{subs:Macaulay}, the problem \ch{\eqref{shap} grows into an even larger one}
if we \ch{use} a block Macaulay matrix. In the following subsection,
we show how we can reduce the $\Delta$-matrices to the
size equal to the number of eigenvalues of \eqref{shap} \ch{using the results from \cite{ALS}}.
\end{example}

\subsection{Compression} \label{subs:compress}
An elegant observation from Alsubaie \cite{ALS} that helps to reduce the $\Delta$-matrices
\eqref{drugi} related to the MEP
\eqref{eq:mepkalg} is that
vectors of the form $\bx \otimes\cdots\otimes \bx$ span a subspace ${\cal T}$ of dimension
$\binom{n+k-1}{k}$ in $\C^n\otimes \cdots \otimes \C^n$ \ch{of dimension $n^k$}. Indeed, if
$\bx= [x_1, \, \dots, \, x_n]^T$, then the
elements of $\bz = \bx \otimes \cdots \otimes \bx$ are $z_{i_1\cdots i_k}=x_{i_1}\cdots x_{i_k}$ for $i_1,\dots,i_k= 1,\dots,n$.
If $(\ell_1,\dots,\ell_k)$ is a permutation of $(i_1,\dots,i_k)$, then $z_{\ell_1\cdots \ell_k}=z_{i_1\cdots i_k}$. There are
$\binom{n+k-1}{k}$ multi-indices $(i_1,\dots,i_k)$, where $1 \le i_1 \le \cdots \le i_k\le n$,
and all remaining multi-indices
are their permutations.
Let $r(i_1,\dots,i_k) =(i_1-1)n^{k-1}+\cdots+(i_{k-1}-1)n+i_k$ be an enumeration from a
multi-index $(i_1,\dots,i_k)$ into
a single index from $1$ to $n^k$, and let $c(j_1,\dots,j_k)$ be a transformation
from a set of multi-indices such that $1 \le j_1 \le \cdots \le j_k\le n$ into a single index from $1$ to $\binom{n+k-1}{k}$. We can define an $n^k\times \binom{n+k-1}{k}$ matrix $T$ such that
$T_{pq}= 1$ if $p=r(i_1,\dots,i_k)$, $q=c(j_1,\dots,j_k)$ and $(i_1,\dots,i_k)$ is a permutation of $(j_1,\dots,j_k)$, and
$T_{pq}= 0$ otherwise. Clearly, each row of $T$ has exactly one nonzero element $1$ and each column of $T$ contains at least one
and at most $k!$ nonzero elements. Then, each vector $\bz$ from ${\cal T}$ can be written as $\bz =T \bw$ for a
$\bw\in \C^{\binom{n+k-1}{k}}$. 

The matrices $\Delta_0^{-1}\Delta_i$ for $i= 1,\dots,k$ commute and ${\cal T}$ is their common invariant subspace. It follows that
$\Delta_0^{-1}\Delta_i T=  T G_i$, \ch{where
$G_i$ is the restriction of $\Delta_0^{-1}\Delta_i$ to the invariant subspace ${\cal T}$,}
for $i= 1,\dots,k$,  and  matrices
$G_1,\dots,G_k$ of size
$\binom{n+k-1}{k}\times \binom{n+k-1}{k}$ commute. Also, if $\blambda$ is an eigenvalue of \eqref{shap}, then
$\blambda$ is a common eigenvalue of $G_1,\dots,G_k$, \ch{i.e., there
exists a nonzero vector $\bw$ such that $G_i\bw =\lambda_i \bw$ for $i=1,\ldots,k$.}

For a generic matrix $Q$ of size $\binom{n+k-1}{k}\times n^k$, the matrix $Q\Delta_0 T$ is nonsingular since $\Delta_0$ is nonsingular
and $T$ has full rank. If we define
$\widehat\Delta_i = Q\Delta_iT$ for $i = 0,\dots,k$, we get matrices of size $\binom{n+k-1}{k}\times \binom{n+k-1}{k}$ such that
$\widehat\Delta_0$ is nonsingular. It follows from ${\widehat\Delta_0}^{-1}\widehat\Delta_i=G_i$ that the matrices
${\widehat\Delta_0}^{-1}\widehat\Delta_i$ for $i= 1,\dots,k$ commute and we can obtain solutions of \eqref{shap} from these matrices.
The size of the matrices $\widehat \Delta_i$ is optimal as it matches the number of solutions of \eqref{shap}.

Now, for $i = 1,\dots,k$, we introduce the $(n+k-1)^k\times n^k$ matrices \[
\wt \Delta_0 =
\left| \begin{matrix} B_1 & \cdots & B_k \\[-0.5mm]
\vdots & & \vdots \\[-0.5mm]
B_1 & \cdots & B_k
 \end{matrix} \right|_\otimes, \qquad
\wt \Delta_i = -\ \left|\begin{matrix}B_1 & \cdots & B_{i-1} & A & B_{i+1} & \cdots & B_k \\[-0.5mm]
\vdots & & \vdots & \vdots & \vdots & & \vdots \\[-0.5mm]
B_1 & \cdots & B_{i-1} & A & B_{i+1} & \cdots & B_k\end{matrix}\right|_\otimes.
\]
If $P_1, \dots, P_k$ are the $n \times (n+k-1)$ matrices from \Cref{mepPB}, then
the $\Delta$-matrices of the corresponding \eqref{drugi} are
$\Delta_i =(P_1 \otimes \cdots \otimes P_k)\,\wt \Delta_i$ for $i = 0,\dots,k$. If the MEP
\eqref{eq:mepkalg} is nonsingular, then 
$\wt \Delta_0$ has full rank.
Then $\widehat\Delta_i =Q\,(P_1 \otimes \cdots \otimes P_k)\,\wt\Delta_i\,T$ for $i = 0,\dots,k$. Instead of
selecting random matrices $Q$, $P_1$, \dots, $P_k$, we get the same effect if we form
$\widehat\Delta_i = \widehat L \, \wt\Delta_i \, T$ for $i = 0,\dots,n$ using
a generic matrix
$\widehat L$ of size $\binom{n+k-1}{k}\times (n+k-1)^k$.
Even better, based on the following result from \cite{ALS} we can construct
a particular sparse matrix $L$ such that $L \, \wt \Delta_0 \, T$ is nonsingular for generic matrices
$B_1,\dots,B_k$.

The matrix $\wt\Delta_0 \, T$ of size $(n+k-1)^k\times \binom{n+k-1}{k}$ has full rank. The next lemma explains
how we can select $\binom{n+k-1}{k}$ linearly independent rows. As the proof is technical but not difficult, we will omit it; for the details, see \cite[Sec.~4.2.2]{ALS}.

\begin{lemma} \label[lemma]{lem:rows_of_D0}
Let $\Psi: = \wt \Delta_0 \, T$ be a $(n+k-1)^k\times \binom{n+k-1}{k}$ matrix as above
and let $\Psi(i_1\dots i_k,:)$ denote the row of $\Psi$ corresponding to a multi-index
$(i_1,\dots,i_k)$, where $1 \le i_1,\dots,i_k\le n+k-1$. Then:
\vspace{-2mm}
\begin{enumerate} \itemsep=-1mm
\item[a)] If $i_p=i_q$ for $p\ne q$, then $\Psi(i_1\dots i_k,:) = 0$.
\item[b)] If $(j_1,\dots,j_k) =(\sigma(i_1),\dots,\sigma(i_k))$ for a permutation $\sigma$, then \newline
$\Psi(j_1\dots j_k,:) ={\rm sgn}(\sigma) \, \Psi(i_1\dots i_k,:)$.
\end{enumerate}
\end{lemma}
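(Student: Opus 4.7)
The plan is to write each entry of $\wt\Delta_0$ as a scalar $k\times k$ determinant and then exploit alternating behavior separately in the row and column multi-indices. Expanding the operator determinant,
$$\wt\Delta_0 \;=\; \sum_{\sigma\in S_k} {\rm sgn}(\sigma)\, B_{\sigma_1}\otimes\cdots\otimes B_{\sigma_k},$$
one recognizes the entry of $\wt\Delta_0$ at row $(i_1,\ldots,i_k)$ and column $(j_1,\ldots,j_k)$ as $\det[(B_a)_{i_p j_p}]_{p,a=1}^k$, whose $p$-th row is $[(B_1)_{i_p j_p},\ldots,(B_k)_{i_p j_p}]$. Right-multiplying by the column $q=c(\ell_1,\ldots,\ell_k)$ of $T$ sums this entry over all tuples $(j_1,\ldots,j_k)$ that are permutations of the weakly increasing multi-index $(\ell_1,\ldots,\ell_k)$, giving
$$\Psi(i_1\ldots i_k,\,q) \;=\; \sum_{(j_1,\ldots,j_k)} \det[(B_a)_{i_p j_p}]_{p,a}.$$

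For part (a), suppose $i_p=i_q$ with $p\ne q$. I would pair the summation terms via the involution that transposes $j_p\leftrightarrow j_q$. Its fixed points satisfy $j_p=j_q$; there the $p$-th and $q$-th rows of the little matrix coincide (because $i_p=i_q$ as well), so the determinant vanishes. At non-fixed points the swap interchanges rows $p$ and $q$ of the little matrix, flipping the sign of the determinant; these terms therefore cancel in pairs, and $\Psi(i_1\ldots i_k,\,q)=0$ for every column $q$.

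For part (b), write $j_p=i_{\sigma(p)}$, so that
$$\Psi(j_1\ldots j_k,\,q) \;=\; \sum_{(m_1,\ldots,m_k)} \det[(B_a)_{i_{\sigma(p)} m_p}]_{p,a}.$$
I would then apply the row permutation $\sigma^{-1}$ inside the determinant, so that its new $p$-th row becomes $[(B_a)_{i_p m_{\sigma^{-1}(p)}}]_a$; this multiplies the determinant by ${\rm sgn}(\sigma^{-1})={\rm sgn}(\sigma)$. Substituting $m'_p=m_{\sigma^{-1}(p)}$ in the outer sum merely relabels the permutations of $(\ell_1,\ldots,\ell_k)$, so the sum equals ${\rm sgn}(\sigma)\,\Psi(i_1\ldots i_k,\,q)$, uniformly in $q$.

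The hard part is not deep, only clean multi-index bookkeeping: one must keep the inner permutation $\sigma\in S_k$ arising from the operator determinant separate from the outer row permutation of part (b), and check in part (a) that the pairing is genuinely fixed-point-free on those terms that contribute a nonzero summand.
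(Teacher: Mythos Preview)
Your proof is correct. The paper does not actually give its own proof of this lemma: it only remarks that the argument is ``technical but not difficult'' and defers to \cite[Sec.~4.2.2]{ALS} for details, so there is nothing in the paper to compare against directly.

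Your approach---writing the $(i_1\ldots i_k,\,j_1\ldots j_k)$-entry of $\wt\Delta_0$ as the scalar determinant $\det[(B_a)_{i_p j_p}]_{p,a}$ and then combining the antisymmetry of this determinant in its rows with the column-symmetrization effected by $T$---is the natural one. The bookkeeping in both parts is clean: in (a) the transposition $j_p\leftrightarrow j_q$ is a genuine involution on the set of distinct rearrangements of $(\ell_1,\ldots,\ell_k)$, its fixed points give determinants with two equal rows (since $i_p=i_q$ and $j_p=j_q$), and at non-fixed points the two little matrices differ precisely by the row swap $p\leftrightarrow q$; in (b) the substitution $m'_p=m_{\sigma^{-1}(p)}$ is a bijection on that same set of rearrangements regardless of repeated entries, so the relabeling is legitimate. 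Nothing is missing.
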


By \Cref{lem:rows_of_D0}, a row of $\wt \Delta_0 \, T$ with a multi-index $(i_1,\dots,i_k)$
is zero if two indices are equal. Moreover, it agrees up to a sign to the row with multi-index $(\sigma(i_1),\dots,\sigma(i_k))$,
where permutation $\sigma$ sorts indices in increasing order.
Since $\wt\Delta_0 \, T$ has rank $\binom{n+k-1}{k}$ and there are exactly $\binom{n+k-1}{k}$
strictly ordered multi-indices $(i_1,\dots,i_k)$
such that $1 \le i_1<\cdots<i_k\le n+k-1$, it follows that if we select all rows of $\wt \Delta_0 \, T$ with these indices,
then the obtained matrix is square and nonsingular.

Similarly to before we introduce
a transformation $\gamma(i_1,\dots,i_k)$ from
the set of strictly ordered multi-indices into a single index from 1 to $\binom{n+k-1}{k}$ and
a transformation $\beta(i_1,\dots,i_k) =(i_1-1)(n+k-1)^{k-1}+\cdots +(i_{k-1}-1)(n+k-1)+i_k$ from a
multi-index $(i_1,\dots,i_k)$, where $1 \le i_1,\dots,i_k\le n+k-1$, into
a single index from $1$ to $(n+k-1)^k$. Then define the $\binom{n+k-1}{k} \times (n+k-1)^k$ matrix $L$ by
$L_{pq}= 1$ if $\beta(i_1,\dots,i_k) =q$, $1 \le i_1<\cdots<i_k\le n+k-1$, and $\gamma(i_1,\dots,i_k) =p$; otherwise,
$L_{pq}= 0$.

\begin{example} \label[example]{ex:exTL}
For $n=2$ and $k=2$ we get the following matrices $T$ and $L$, which compress the $\wt \Delta$-matrices from size $9 \times 4$ to size $3 \times 3$: 
\[
T= \smath{\begin{blockarray}{cccc}
& 11 & 12 & 22 \\
\begin{block}{c[ccc]}
 11 & 1 & 0 & 0  \\
 12 & 0 & 1 & 0 \\
 21 & 0 & 1 & 0  \\
 22 & 0 & 0 & 1 \\
\end{block}
\end{blockarray}},
\qquad
L= \smath{\begin{blockarray}{cccccccccc}
& 11 & 12 & 13 & 21 & 22 & 23 & 31 & 32 & 33 \\
\begin{block}{c[ccccccccc]}
 12 & 0 & 1 & 0 & 0 & 0 & 0 & 0 & 0 & 0 \\
 13 & 0 & 0 & 1 & 0 & 0 & 0 & 0 & 0 & 0 \\
 23 & 0 & 0 & 0 & 0 & 0 & 1 & 0 & 0 & 0 \\
\end{block}
\end{blockarray}}.
\]
\chh{Note that we label rows of $T$ by multiindices $(i,j)$ such that $1\le i,j\le n=2$ and
columns by multiindices $(p,q)$ such that $1\le p\le q\le n=2$ using standard lexicographic ordering.
There are ones in $T$ exactly on places where $(i,j)$ is a permutation of $(p,q)$.
In matrix $L$ we label rows by multiindices $(i,j)$ such that $1\le i<j\le n+k-1=3$ and columns
 by multiindices $(p,q)$ such that $1\le p,q\le n+k-1=3$, again using standard lexicographic ordering.
 There are ones in $L$ exactly on places where $(i,j)=(p,q)$.}
\end{example}

\begin{theorem} \label{thm:ALS}
There exists a generic set $\Omega\subset (\C^{(n+k-1) \times n})^k$
such that for each
$A\in \C^{(n+k-1) \times n}$ and $(B_1,\ldots,B_k)\in\Omega$ the following is true:
\vspace{-2mm}
\begin{enumerate} \itemsep=-1mm
\item[1)] The matrix $D_0 = L \, \wt\Delta_0 \, T$ is nonsingular.
\item[2)] Problem \eqref{shap} has $\binom{n+k-1}{k}$ eigenvalues counting multiplicities.
\item[3)] ${D_0}^{-1}D_1,\dots,{D_0}^{-1}D_k$, where
$D_i =L \, \wt\Delta_i \, T$ for $i = 1,\dots,k$, commute.
\item[4)] $\blambda=(\lambda_1,\dots,\lambda_k) \in \C^k$ is a common eigenvalue of
${D_0}^{-1}D_1,\dots,{D_0}^{-1}D_k$
if and only if $\blambda$ is an eigenvalue of \eqref{shap}.
\end{enumerate}
\end{theorem}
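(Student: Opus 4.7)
The plan is to build on \Cref{mepPB} together with a single structural observation: the columns of every $\wt\Delta_i T$ are antisymmetric tensors. A direct computation gives
\[
\wt\Delta_0\,(\bx\otimes\cdots\otimes\bx) \;=\; \sum_{\sigma\in S_k}{\rm sgn}(\sigma)\,B_{\sigma_1}\bx\otimes\cdots\otimes B_{\sigma_k}\bx,
\]
which is the image of the wedge $B_1\bx\wedge\cdots\wedge B_k\bx$ under the natural embedding $\Lambda^k\C^{n+k-1}\hookrightarrow(\C^{n+k-1})^{\otimes k}$; with $B_i$ replaced by $A$ in the $i$-th slot, the analogous statement holds for $\wt\Delta_i T$. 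Since ${\rm Im}(T)$ is spanned by the symmetric decomposable tensors $\bx\otimes\cdots\otimes\bx$, every column of $\wt\Delta_i T$ lies in the antisymmetric subspace $\Lambda^k\C^{n+k-1}$, of dimension $\binom{n+k-1}{k}$. Moreover, \Cref{lem:rows_of_D0} shows that $L$ acts as a Pl\"ucker-style coordinate map on this subspace, so its restriction $L|_{\Lambda^k\C^{n+k-1}}$ is an isomorphism onto $\C^{\binom{n+k-1}{k}}$.

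Parts 1 and 2 follow quickly from this structure. For Part 2, \Cref{mepPB} produces a generic set $\Omega$ of $(B_1,\ldots,B_k)$ making ${\rm rank}(\lambda_1 B_1+\cdots+\lambda_k B_k)=n$ for every $\blambda\ne\zero$, and \Cref{lem:shapiro} then yields $\binom{n+k-1}{k}$ eigenvalues of \eqref{shap} for every $A$. For Part 1, nonsingularity of $D_0=L\wt\Delta_0 T$ reduces, via the isomorphism of $L$ on $\Lambda^k\C^{n+k-1}$, to the map $\wt\Delta_0 T\colon\C^{\binom{n+k-1}{k}}\to\Lambda^k\C^{n+k-1}$ being an isomorphism. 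This is a generic condition on $(B_1,\ldots,B_k)$ — a concrete witness is $B_j=P_j^T$ with the selection matrices from the proof of \Cref{mepPB} — and by intersecting $\Omega$ with this generic set I may assume $D_0$ is nonsingular throughout.

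For Parts 3 and 4, fix $(B_1,\ldots,B_k)\in\Omega$ and arbitrary $A$, and pick $(P_1,\ldots,P_k)$ from the generic set $\Theta$ of \Cref{mepPB}. Then \eqref{eq:mepkalg} is nonsingular, the operators $\Delta_0^{-1}\Delta_i$ commute and leave ${\cal T}={\rm Im}(T)$ invariant, and their matrices in the $T$-basis are commuting $G_1,\ldots,G_k$ satisfying $\Delta_i T=\Delta_0 T G_i$. Using $\Delta_i=(P_1\otimes\cdots\otimes P_k)\wt\Delta_i$, this rewrites as
\[
(P_1\otimes\cdots\otimes P_k)\bigl(\wt\Delta_i T-\wt\Delta_0 T\,G_i\bigr) \;=\; \zero.
\]
The bracketed vectors live in $\Lambda^k\C^{n+k-1}$, and a dimension count ($\binom{n+k-1}{k}\le n^k$) yields that $(P_1\otimes\cdots\otimes P_k)|_{\Lambda^k\C^{n+k-1}}$ is injective for generic $P$'s, so $\wt\Delta_i T=\wt\Delta_0 T\,G_i$. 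Left-multiplying by $L$ gives $D_i=D_0 G_i$, hence $D_0^{-1}D_i=G_i$, and Part 3 follows from commutativity of the $G_i$. For Part 4, any joint eigenvector $\bw$ of the $G_i$ at $\blambda$ lifts to $T\bw\in{\cal T}$, a joint eigenvector of the $\Delta_0^{-1}\Delta_i$; by Atkinson's theory this vector is decomposable $\bx_1\otimes\cdots\otimes\bx_k$ with $P_i(A+\lambda_1 B_1+\cdots+\lambda_k B_k)\bx_i=\zero$, and membership in ${\cal T}$ together with generic injectivity of the combined map $\bv\mapsto(P_1\bv,\ldots,P_k\bv)$ forces the $\bx_i$ to be parallel to a single $\bx$ satisfying $(A+\lambda_1 B_1+\cdots+\lambda_k B_k)\bx=\zero$, i.e.\ $\blambda$ is an eigenvalue of \eqref{shap}; the converse and the multiplicity count (both sides carry $\binom{n+k-1}{k}$ eigenvalues by Part 2) are immediate. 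The main obstacle is the $P$-free identity $\wt\Delta_i T=\wt\Delta_0 T\,G_i$: although $G_i$ is initially produced through $P$-dependent data, its intrinsic character is exactly what the antisymmetric-tensor viewpoint unlocks, and removing the $P$'s from the picture is the step that makes the whole compression argument click.
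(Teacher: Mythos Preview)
Your argument tracks the paper's closely: Parts 1 and 2 use the same witness $B_i=P_i^T$ and intersection of generic sets, and Parts 3 and 4 rest on identifying $D_0^{-1}D_i$ with the restriction $G_i$ of $\Delta_0^{-1}\Delta_i$ to the symmetric subspace ${\cal T}$, exactly as in the discussion preceding \Cref{lem:rows_of_D0}. Your exterior-algebra framing---that the columns of every $\wt\Delta_iT$ lie in $\Lambda^k\C^{n+k-1}$ and that $L$ restricts to an isomorphism on that subspace---is a genuine conceptual sharpening: it makes the passage from the $P$-dependent identity $\Delta_iT=\Delta_0TG_i$ to the intrinsic one $\wt\Delta_iT=\wt\Delta_0TG_i$ (and hence $D_i=D_0G_i$) cleaner than the paper's treatment, which leaves the move from a generic left-compressor to the specific selection matrix $L$ somewhat implicit.

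There is, however, one flawed step in Part 4. You write that ``by Atkinson's theory this vector is decomposable,'' but Atkinson only guarantees that the joint eigenspace of the $\Delta_0^{-1}\Delta_i$ is \emph{spanned} by decomposable tensors; when the eigenvalue is not geometrically simple, an individual joint eigenvector $T\bw$ need not itself be a pure tensor $\bx_1\otimes\cdots\otimes\bx_k$. The repair is easy. Since $T\bw$ lies both in $V_1\otimes\cdots\otimes V_k$ (with $V_i=\ker P_iM(\blambda)$) and in the symmetric subspace, permutation invariance places it in $V_{\sigma(1)}\otimes\cdots\otimes V_{\sigma(k)}$ for every $\sigma$, hence in $(\cap_iV_i)^{\otimes k}$; this forces $\cap_iV_i\ne\{0\}$ and, combined with your generic condition $\cap_i\ker P_i=\{0\}$, produces the required $\bx$ with $M(\blambda)\bx=\zero$. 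Alternatively, the multiplicity count you invoke at the end already suffices for the if-and-only-if and is in fact all the paper's discussion relies on; either way the decomposability sentence should be amended.
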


\begin{proof}
For 1) and 2) we consider $\det(D_0)$ as a polynomial in elements of $B_1,\dots,B_k$. Then $\det(D_0) = 0$ is a proper algebraic \ch{variety},
since, if we take $B_i = P_i^T$, \ch{where
$P_i = [0_{n,i-1} \ \, I_n \ \, 0_{n,k-i}],\ i=1,\ldots,k$,
are matrices from the proof of \Cref{mepPB}}, then
$\det(D_0) \ne 0$. Therefore, there exists a
generic set $\Omega_1\subset \left(\C^{(n+k-1) \times n}\right)^k$ such that $D_0$ is nonsingular for
$(B_1,\dots,B_k) \in \Omega_1$. We know from \ch{\Cref{mepPB}} and \Cref{lem:shapiro} that there exists
a generic set $\Omega_2\subset \left(\C^{(n+k-1) \times n}\right)^k$ such that for $(B_1,\dots,B_k) \in \Omega_2$ and
arbitrary $A$ problem \eqref{shap} has $\binom{n+k-1}{k}$ eigenvalues counting multiplicities.
It follows that 1) and 2) are true for $\Omega:= \Omega_1\cap\Omega_2$.

Items 3) and 4) follow from the discussion before \Cref{lem:rows_of_D0}.
\end{proof}

We remark that the results in \Cref{thm:ALS} are closely related to results in \cite[Sec.~4]{ALS}. Using a  different derivation Alsubaie obtained the same matrices $D_0,\ldots,D_k$ using compression matrices
$L$ and $T$, and derived a connection between the
solutions of \eqref{shap} and the GEP $D_1\bw =\lambda_1 D_0 \bw$. \ch{The results in  \Cref{thm:ALS} are
more general and connect eigenvalues of \eqref{shap} to common eigenvalues of
commuting matrices ${D_0}^{-1}D_1,\dots,{D_0}^{-1}D_k$ in a similar way as nonsingular standard MEP \eqref{problem} is related to commuting matrices
${\Delta_0}^{-1}\Delta_1,\dots,{\Delta_0}^{-1}\Delta_k$ from \eqref{drugi}. This enables us to apply
several theoretical results as well as numerical methods for
systems of joint GEPs that are available for the standard MEPs.}

Based on \Cref{thm:ALS} and inspired by Algorithms 2 and 3 in Alsubaie \cite{ALS} we propose the following algorithm.
While the algorithms in \cite{ALS} solve $D_1\bw =\lambda_1 D_0 \bw$ under the assumption that $\lambda_1$ is simple,
we exploit the existing \ch{numerical} methods from \cite{MultiParEig} to solve a joint system of GEPs and, therefore, the method
works for the general case \ch{with possible multiple eigenvalues}.

\noindent\vrule height 0pt depth 0.5pt width \textwidth \\[-0.5mm]
{\bf Algorithm~2: Eigenvalues of a rectangular linear $k$-parameter MEP \eqref{shap}} \\[-3mm]
\vrule height 0pt depth 0.3pt width \textwidth \\
{\bf Input:} $(n+k-1) \times n$ matrices $A$ and $B_1,\dots,B_k$ \\
{\bf Output:} eigenvalues $\blambda^{(j)} = (\lambda_1^{(j)},\dots,\lambda_k^{(j)})$ for $j=1,\dots,\binom{n+k-1}{k}$\\
\begin{tabular}{ll}
{\footnotesize 1:} & Construct $\binom{n+k-1}{k}\times (n+k-1)^k$ matrix $L$ and $n^k\times \binom{n+k-1}{k}$ matrix $T$ for the compression.\\
{\footnotesize 2:} & Compute $(n+k-1)^k\times n^k$ matrices $\wt \Delta_0,\dots, \wt \Delta_k$. \\
{\footnotesize 3:} & Compute matrices $D_i = L \, \wt \Delta_i \, T$ for $i = 0,\dots, k$. \\
{\footnotesize 4:} & Solve a joint system of GEPs
$D_i \bw = \lambda_i D_0 \bw$ for $i = 1,\dots,k$ and obtain eigenvalues $\blambda^{(j)}$\\
& for $j=1,\dots,\binom{n+k-1}{k}$.\\
\end{tabular} \\[0.5mm]
\vrule height 0pt depth 0.5pt width \textwidth

The sparsity of $L$ and $T$ can be exploited to compute the
$D_i$ without explicitly computing the $\wt \Delta_i$. Note that the $\wt \Delta_i$ can be so
large that it is impossible to compute the matrix due to memory limitations, while $D_i$ can be much smaller.
In \cite{ALS}, two methods \ch{are} presented for the computation of the $D_i$: one for problems of moderate
size where it is more efficient to first explicitly compute the $\wt \Delta_i$, and another one for larger problems where the required elements of the $\wt \Delta_i$ are computed one by one.

In Algorithm~2 there are no redundant solutions, and, therefore, it is more efficient than Algorithm~1 for
problems where $\binom{n+k-1}{k}$ is small enough that we can explicitly compute
the $\widetilde\Delta$-matrices and solve the system of GEPs in Step~4; recall that the $\Delta$-matrices in Algorithm~1 are of much larger size $n^k\times n^k$.

If $\binom{n+k-1}{k}$ is too large for the above, \ch{then} an option is to apply an iterative subspace method (for instance {\tt eigs} in Matlab) to
compute just a small subset of eigenvalues close to a given target. As we still have to compute the matrices $D_i$, this
is suitable 
for problems with sparse matrices where we can exploit the sparsity of $L$ and $T$.
If this is not the case, in particular for $k=2$ and $k=3$, it might be more efficient
to use Algorithm~1 and apply an iterative subspace method tailored to the MEP, that does not need the $\Delta$-matrices explicitly.

\Cref{tab:compare} provides an overview of the main advantages and disadvantages of Algorithms~1 and 2.

\begin{table}[htb!]
\centering
\caption{Summary of pros and cons of Algorithms~1 and 2.} \label{tab:compare}
\begin{tabular}{lll} \hline \rule{0pt}{2.3ex}%
& Alg.~1 & Alg.~2 \\ \hline \rule{0pt}{2.3ex}%
Pros & $\bullet$ Simple construction & $\bullet$ Uses $D$-matrices of optimal size \\
& $\bullet$ Numerical methods for MEPs that do & $\bullet$ No redundant solutions \\
& \phantom{- }not require $\Delta$-matrices can be applied & $\bullet$ Sparsity is preserved \\[0.5mm] \hdashline \rule{0pt}{2.8ex}%
Cons & $\bullet$ Uses much larger $\Delta$-matrices& $\bullet$ $D$-matrices are needed explicitly \\
& $\bullet$ Sparsity is lost with random $P$ & $\bullet$ Kronecker structure of $\wt \Delta$-matrices\\
&  & \phantom{- }is lost in multiplication by $L$ and $T$\\\hline
\end{tabular}
\end{table}

\subsection{Block Macaulay algorithm} \label{subs:Macaulay}
To the best of our knowledge, the only other \ch{available}
numerical method for \ch{solving a linear}
RMEP \eqref{rectmep} uses block Macaulay matrices.
We give a brief introduction of the method, for details see, e.g., \cite{DeMoor_LTI, DeMoor_CIS,DeMoor_ARMA, DeMoor_LAA}.
In case $k=2$, we start from
\eqref{shap} and multiply the equation with monomials $\lambda_1^i\lambda_2^j$ of increasing order,
where we add rows in blocks of the same degree. We get a homogeneous system with
the block Macaulay matrix
\begin{equation} \label{eq:macaulay}
\begin{blockarray}{ccccccccccc}
& \sca{1} & \sca{\lambda_1} & \sca{\lambda_2} & \sca{\lambda_1^2} & \sca{\lambda_1\lambda_2} & \sca{\lambda_2^2} & \sca{\lambda_1^3} & \sca{\lambda_1^2\lambda_2} & \dots \\
\begin{block}{c[cccccccccc]}
\sca{1} & A & B_1 & B_2 & 0 & 0 & 0 & 0 & 0 & \cdots \\
\sca{\lambda_1} & 0 & A & 0 & B_1 & B_2 & 0 & 0 & 0 & \cdots \\
\sca{\lambda_2} & 0 & 0 & A & 0 & B_1 & B_2 & 0 & 0 & \cdots \\
\sca{\lambda_1^2} & 0 & 0 & 0 & A & 0 & 0 & B_1 & B_2 & \cdots \\
\vdots & \vdots & \vdots & \vdots& \vdots & \ddots & \ddots& \ddots & \ddots & \ddots \\
\end{block}
\end{blockarray}
\mtxb{\bx \\ \lambda_1 \bx \\ \lambda_2 \bx \\ \lambda_1^2 \bx \\
\vdots} = \zero
\end{equation}
and a solution to \eqref{shap} corresponds to a structured vector in the nullspace that has a block Vandermonde-like structure.
We say that a block Macaulay matrix is of degree $m$ if it includes all columns corresponding to monomials of degree at most $m$.
A block Macaulay matrix of \chh{degree} $m$ for \eqref{shap} then contains rows corresponding to monomials of \ch{total} degree $m-1$ or less.
For a degree $m\ge m^*$, \ch{where $m^*$ is the minimal sufficient degree, the structure of the nullspace stabilizes and} we can compute
all solutions of \eqref{shap} from the nullspace of the block Macaulay matrix; for details see, e.g., \cite{DeMoor_ARMA}.

\begin{lemma} \label[lemma]{lem:maxsize}
(\cite[p.~185]{DeMoor_LAA}) A block Macaulay matrix of degree $m$ for the linear rectangular $k$-parameter eigenvalue problem \eqref{shap} with
matrices of size $(n+k-1) \times n$ has size
\begin{equation} \label{eq:binom_Mac}
\textstyle (n+k-1)\,\binom{m+k-1}{k} \ \times \ n \, \binom{m+k}{k}.
\end{equation}
\end{lemma}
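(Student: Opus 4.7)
The plan is to count the row blocks and column blocks of the block Macaulay matrix \eqref{eq:macaulay} separately, using the fact that each block has a fixed size determined by the dimensions of $A,B_1,\dots,B_k$.

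First I would observe the construction: a block Macaulay matrix of degree $m$ is obtained by multiplying the equation $(A + \lambda_1 B_1 + \cdots + \lambda_k B_k)\bx = \zero$ by each monomial $\blambda^\bomega$ of total degree $|\bomega|\le m-1$, and then indexing the unknowns by the structured vectors $\blambda^\bomega \bx$ for monomials $\blambda^\bomega$ of total degree $|\bomega|\le m$. The shift by one between rows and columns arises from the linearity of $M(\blambda)$: multiplying a row equation indexed by a monomial of degree $d$ distributes the $(n+k-1)\times n$ blocks $A$ and $B_1,\dots,B_k$ into column indices of degree $d$ (from $A$) and degree $d+1$ (from the $B_i$), so in order to include all such column contributions one must enlarge the column set by exactly one degree.

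Next I would count the number of row blocks and column blocks using the standard stars-and-bars formula: the number of monomials in $k$ variables of total degree at most $d$ equals $\binom{d+k}{k}$. Applying this with $d=m-1$ yields $\binom{m+k-1}{k}$ row blocks, and with $d=m$ yields $\binom{m+k}{k}$ column blocks. Since each row block is an $(n+k-1)\times n$ matrix (the shape of $A$ and of the $B_i$), the total row count is $(n+k-1)\binom{m+k-1}{k}$ and the total column count is $n\binom{m+k}{k}$, giving \eqref{eq:binom_Mac}.

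There is essentially no obstacle beyond bookkeeping; the only point that requires a moment of care is the degree-shift between rows and columns, which I would justify explicitly by noting that the column index set must contain $\bomega + \be_i$ for every row index $\bomega$ and every $i\in\{1,\dots,k\}$, together with $\bomega$ itself. Since the row index set consists of all $\bomega$ with $|\bomega|\le m-1$, the column index set is exactly $\{\bomega:|\bomega|\le m\}$, which closes the argument.
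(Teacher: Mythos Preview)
Your proposal is correct and follows the same approach as the paper: both rely on the fact that the number of monomials in $k$ variables of total degree at most $d$ is $\binom{d+k}{k}$, applied once with $d=m-1$ for the row blocks and once with $d=m$ for the column blocks. The paper's proof is a single sentence stating this monomial count, while you have spelled out the bookkeeping (the degree shift between rows and columns, and the block sizes) more explicitly; the underlying argument is identical.
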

\begin{proof} The result follows from the fact that there are $\binom{m+k}{k}$ different monomials in
$k$ variables of \ch{total} degree up to $m$.
\end{proof}

We have implemented the nullspace block Macaulay algorithm
from \cite{DeMoor_ARMA}. Based on numerous
numerical experiments with various $k$ and $n$, we
conjecture that $n$ is the minimal sufficient degree for the extraction of eigenvalues of \eqref{shap} using a linear polynomial for a shift.
With this hypothesis we can assume that $m=n$ in \eqref{eq:binom_Mac},
compute the minimal size of the block Macaulay matrix and compare that to
Algorithm~1, where the $\Delta$-matrices of \eqref{eq:mepkalg}
are of size $n^k\times n^k$, and Algorithm~2, where the compressed matrices $D_i$
are of size $\binom{n+k-1}{k}\times \binom{n+k-1}{k}$. A comparison for $2$, $3$, and $4$ parameters
and various values of $n$ is given in \Cref{tab:comp_rmped_234}.
\chh{Let us remark that the very recent recursive solver in \cite{DeMoor_SISC} does not require an explicit construction of the block Macaulay matrices, which
speeds us the computation tremendously and enables solution of larger problems. However, even with this speed up, due to large dimensions of the block
Macaulay matrices, this solver may be slower than the methods presented in this paper; see, e.g., \Cref{ex:arma11} and \Cref{ex:lti2_11}.}

\begin{table}[htb!]
\centering
{\footnotesize
\caption{Sizes of matrices of the block Macaulay method, Algorithm~1, and Algorithm~2 required to solve a generic linear
$k$-parameter RMEP \eqref{shap} with matrices of size $(n+k-1) \times n$ for $k=2,3,4$ and
$n=2,4,\dots,20$. For rectangular matrices from the block Macaulay method we give the number of rows, which is smaller than the number of columns.} \label{tab:comp_rmped_234}
\vspace{1mm}
\begin{tabular}{r|ccc|ccc|ccc} \hline \rule{0pt}{2.1ex}%
& \multicolumn{3}{c|}{$k=2$} & \multicolumn{3}{|c|}{$k=3$} & \multicolumn{3}{|c}{$k=4$} \\
$n$ & Mac. & Alg.~1 & Alg.~2 & Mac. & Alg.~1 & Alg.~2 & Mac. & Alg.~1 & Alg.~2
\\
\hline \rule{0pt}{2.2ex}%
\ph{1}2 & $\ph{111}9$ & $\ph{12}4$ & $\ph{12}3$ & $\ph{111}16$ & $\ph{123}8$ & $\ph{123}4$ & $\ph{1111}25$ & $\ph{1234}16$ & $\ph{123}5$ \\
\ph{1}4 & $\ph{11}50$ & $\ph{1}16$ & $\ph{1}10$ & $\ph{11}120$ & $\ph{12}64$ & $\ph{12}20$ & $\ph{111}245$ & $\ph{123}256$ & $\ph{12}35$ \\
\ph{1}6 & $\ph{1}147$ & $\ph{1}36$ & $\ph{1}21$ & $\ph{11}448$ & $\ph{1}216$ & $\ph{12}56$ & $\ph{11}1134$ & $\ph{12}1296$& $\ph{1}126$ \\
\ph{1}8 & $\ph{1}324$ & $\ph{1}64$ & $\ph{1}36$ & $\ph{1}1200$ & $\ph{1}512$ & $\ph{1}120$ & $\ph{11}3630$ & $\ph{12}4096$& $\ph{1}330$ \\
10 & $\ph{1}605$ & $100$ & $\ph{1}55$ & $\ph{1}2640$ & $1000$ & $\ph{1}220$ & $\ph{11}9295$ & $\ph{1}10000$ & $\ph{1}715$ \\
12 & $1014$ & $144$ & $\ph{1}78$ & $\ph{1}5096$ & $1728$ & $\ph{1}364$ & $\ph{1}20475$ & $\ph{1}20736$ & $1365$ \\
14 & $1575$ & $196$ & $105$ & $\ph{1}8960$ & $2744$ & $\ph{1}560$ & $\ph{1}40460$ & $\ph{1}38416$ & $2380$ \\
16 & $2312$ & $256$ & $136$ & $14688$ & $4096$ & $\ph{1}816$ & $\ph{1}73644$ & $\ph{1}65536$ & $3876$ \\
18 & $3249$ & $324$ & $171$ & $22800$ & $5832$ & $1140$ & $125685$ & $104976$ & $5985$ \\
20 & $4410$ & $400$ & $210$ & $33880$ & $8000$ & $1540$ & $203665$ & $160000$ & $8855$ \\ \hline
\end{tabular}}
\end{table}

Of the three methods, Algorithm~2 uses the smallest matrices of the optimal size $\binom{n+k-1}{k}$ to solve a linear RMEP \eqref{shap}.
We can see from Lemma~\ref{lem:maxsize} that for the same problem
we need block Macaulay matrices with
$(n+k-1)$-times as many rows.
Algorithm~1, which uses matrices of size $n^k$, is also asymptotically (for a fixed $k$)
more efficient than
the block Macaulay method.
\ch{However, for $k\ge 4$ is Algorithm~1 more efficient only when $n$ is very large, which makes
Algorithm~1 in practice feasible and efficient  only for $k=2$ and $k=3$.
}

\section{Polynomial rectangular MEPs} \label{sec:polyrmep}
A polynomial RMEP has the form \eqref{rectmep},
where the highest total degree of the monomials is $d>1$ and matrices are of size $(n+k-1) \times n$.
We assume that the normal rank of $M$ is full, {\ch i.e.}, ${\rm nrank}(M)=n$.
The following new theorem extends \ch{the} result from \cite{Shapiro} to polynomial RMEPs.

\ch{
\begin{theorem} \label{lem:fin_sol} A
generic polynomial rectangular $k$-parameter eigenvalue problem \eqref{rectmep} of degree $d$ has exactly
\begin{equation} \label{eq:inters_number}
d^k \, \binom{n+k-1}{k}
\end{equation}
eigenvalues, which are all finite.
\end{theorem}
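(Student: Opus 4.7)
The plan is to identify the set of eigenvalues with a zero-dimensional degeneracy locus on projective space and apply the Thom--Porteous formula from intersection theory. First, homogenize the problem by introducing an auxiliary variable $\lambda_0$: replace each monomial $\blambda^\bomega$ with $|\bomega|\le d$ by $\lambda_0^{d-|\bomega|}\blambda^\bomega$, so that every entry of the resulting matrix $\widetilde M(\lambda_0,\lambda_1,\ldots,\lambda_k)$ is a homogeneous form of degree $d$. Eigenvalues of \eqref{rectmep} correspond precisely to the points of the affine chart $\{\lambda_0\ne 0\}\subset\PP^k$ at which $\widetilde M$ has rank less than $n$.

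View $\widetilde M$ as a morphism $E\to F$ of vector bundles on $\PP^k$ with
\[
E=\mathcal{O}_{\PP^k}(-d)^{\oplus n}\quad\text{and}\quad F=\mathcal{O}_{\PP^k}^{\oplus(n+k-1)}.
\]
The eigenvalue set (together with any points at infinity) is then the degeneracy locus $D_{n-1}(\widetilde M)$. Its expected codimension equals $(n-(n-1))\bigl((n+k-1)-(n-1)\bigr)=k=\dim\PP^k$, so generically $D_{n-1}(\widetilde M)$ is a finite set of reduced points. The Thom--Porteous formula expresses the class of this locus in the Chow ring of $\PP^k$ as $c_k(F-E)$. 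Since $c(F)=1$ and $c(E)=(1-dh)^n$ where $h$ is the hyperplane class,
\[
c(F-E)=\frac{1}{(1-dh)^n}=\sum_{i\ge 0}\binom{n+i-1}{i}(dh)^i,
\]
and the coefficient of $h^k$ equals $\binom{n+k-1}{k}\,d^k$. Integrating over $\PP^k$ yields the desired total.

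Two genericity statements are still required. A Bertini-type argument on the parameter space of coefficient tuples $\{A_\bomega\}$ ensures that for a generic choice $D_{n-1}(\widetilde M)$ is zero-dimensional and reduced, so the scheme-theoretic degree matches the set-theoretic count. To conclude that every eigenvalue is finite, restrict $\widetilde M$ to the hyperplane at infinity $\{\lambda_0=0\}\cong\PP^{k-1}$; the corresponding degeneracy locus has expected codimension $k$, which exceeds the ambient dimension $k-1$, and is therefore generically empty.

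The main obstacle lies in the genericity analysis rather than in the Chern-class bookkeeping, since one must rule out pathological coefficient configurations in which the degeneracy locus becomes non-reduced, positive-dimensional, or contains points at infinity. A more elementary alternative would combine the projection technique of \Cref{mepPB} (extended to the polynomial setting) with a direct count of the eigenvalues of the projected polynomial MEP (generically $(dn)^k$) and then extract the RMEP solutions via an appropriate compression subspace as in \Cref{subs:compress}; however, tracking the required multiplicities through a polynomial linearization is itself more delicate than the direct intersection-theoretic route sketched above.
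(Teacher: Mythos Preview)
Your argument is correct and takes a genuinely different route from the paper. The paper works externally in the space $\calm$ of $(n+k-1)\times n$ matrices: it views the homogenized pencil as a parameterized family $\cals(\underline A)\subset\calm$, identifies the eigenvalues with the intersection $\cals(\underline A)\cap\calm_1$ where $\calm_1$ is the determinantal variety of rank-deficient matrices, and then invokes B\'ezout's theorem after computing $\deg(\cals(\underline A))=d^k$ and $\deg(\calm_1)=\binom{n+k-1}{k}$ separately. Transversality is certified by an explicit witness, namely a pencil containing only the monomials $1,\lambda_1^d,\ldots,\lambda_k^d$, for which the substitution $\xi_i=\lambda_i^d$ reduces the count to the already-established linear case of \Cref{lem:shapiro}. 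You instead work internally on $\PP^k$, package the pencil as a bundle map $\calo(-d)^{\oplus n}\to\calo^{\oplus(n+k-1)}$, and read off the degree of the degeneracy locus via Thom--Porteous as the single Chern class $c_k(F-E)$. Your route is more streamlined once the machinery is in hand---the two degree factors emerge together from one generating-function identity---and the Bertini step can be grounded in the fact that $E^*\otimes F\cong\calo(d)^{\oplus n(n+k-1)}$ is globally generated, so a generic section has a reduced zero-dimensional degeneracy locus. The paper's approach buys accessibility: it uses only classical B\'ezout and a concrete example, so no vector-bundle language is required, and the finiteness argument at infinity is the same dimension count you give.
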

}

\begin{proof}
We consider a linear space $\calm$ of all complex $(n+k-1) \times n$ matrices. \ch{ It is a projective variety.}
Let $\calm_1$ be a subset of $\calm$ that contains matrices that do not have full rank, i.e., rank is less than $n$.
Then $\calm_1$ is \ch{a projective} variety of codimension $k$ and degree $\binom{n+k-1}{k}$; see, e.g., \cite[Prop.~2.15]{detrings}.

\ch{
Each $\ell$-tuple $\underline{A}=(A_\bomega)_{|\bomega|\le d}\in{\cal M}^\ell$, where $\ell=\binom{k+d}{k}$ is the number of monomials in $k$ variables of total degree $d$ or less,
corresponds to a polynomial RMEP \eqref{rectmep}. For each $\underline{A}\in{\cal M}^\ell$ we
define a subset of $\calm$ of the form
\begin{equation}\label{eq:s_a_variety}
\cals(\underline A):= \Big\{\sum_{ |{^h}\bomega| = d} {^h}\blambda^{{^h}\bomega} A_\bomega:\ {^h}\blambda\in \PP^k(\C)\Big\},
\end{equation}
which corresponds to the image of the homogenization of the polynomial $M({\blambda})$ from \eqref{rectmep}.
Here $\PP^k(\C)$ is projective space of dimension $k$, ${^h}\blambda=\left[\lambda_0:\lambda_1:\cdots:\lambda_k\right]$ are homogeneous coordinates and
${^h}\bomega=(d-|\bomega|,\omega_1,\ldots,\omega_k)$ is the corresponding homogenized multi-index.}
For a generic $\underline A$, \chh{$\zero\not\in\cals(\underline A)$
and it follows that}
\ch{$\cals(\underline A)$} is
\ch{a projective} variety of dimension $k$ \ch{\cite[Ch. 8, \S{5}, Thm. \chh{11}]{CLO}.}
\ch{The} degree \ch{of $\cals(\underline A)$} is equal to the number of intersection points with $k$ generic hyperplanes in
$\calm$. It is easy to see that intersections with hyperplanes lead to a system of \ch{homogeneous} polynomial equations
$s_i(\ch{{^h}\blambda}) = 0$, $i = 1,\dots,k$, where $s_i$ is a scalar \ch{$(k+1)$}-variate
\ch{homogeneous} polynomial of degree $d$.
As such system of polynomial equations generically has $d^k$ solutions by B\'ezout's Theorem (see, e.g., \cite[Ch. 4, Sec. 2] {invalggeo}),
this is the degree of \ch{$\cals(\underline A)$.}

If \ch{$\cals(\underline A)$} is transversal to $\calm_1$, i.e., $\ch{\cals(\underline A)}\cap \calm_1$ is a finite set, then,
by B\'ezout's Theorem, $|\ch{\cals(\underline A)}\cap \calm_1| = \deg(\ch{\cals(\underline A)}) \cdot \deg(\calm_1)=d^k \, \binom{n+k-1}{k}$.
\chh{What remains to show is that there exists $\underline A\in\calm^\ell$ such that
$|\ch{\cals(\underline A)}\cap \calm_1|=d^k \, \binom{n+k-1}{k}$. Namely, all
$\underline A\in\calm^\ell$ such that \ch{$\cals(\underline A)$} is not transversal to $\calm_1$ form
a subvariety ${\cal U}$ of $\calm^\ell$.}
To show that ${\cal U}$ is a proper subvariety, we take a polynomial RMEP
 \eqref{rectmep}
that contains only monomials $1,\lambda_1^d,\dots,\lambda_k^d$ and write it as a linear
RMEP using substitution $\bxi=(\xi_1,\dots,\xi_k) =(\lambda_1^d,\dots,\lambda_k^d)$.
We know from \Cref{sec:linrmep} that a generic linear RMEP has $\binom{n+k-1}{k}$ eigenvalues $\bxi$ and from each $\bxi$ we get $d^k$ eigenvalues
$(\lambda_1,\dots,\lambda_k)$. This gives
$d^k \, \binom{n+k-1}{k}$ eigenvalues and proves that \eqref{eq:inters_number} is the number of eigenvalues \ch{(finite and infinite)} for a generic polynomial
 RMEP of degree $d$.

\ch{Finally, let us show that for a generic polynomial RMEP, i.e., for a generic $\underline A\in{\cal M}^\ell$,  all eigenvalues are finite.
Similarly to \eqref{eq:s_a_variety} we define
 \[
\cals_d(\underline A):= \Big\{\sum_{ |\bomega| = d} \blambda^{\bomega} A_\bomega:\ \zero\ne\blambda\in \C^{k}\Big\},
\]
where we consider only monomials of total degree equal to $d$ in \eqref{rectmep} and nonzero $\blambda$. Because of
$\blambda\ne\zero$ we can view $\cals_d(\underline A)$ as a projective variety of dimension $k-1$ and, therefore,
since the codimension of $\calm_1$ is $k$,
$\chh{\cals_d}(\underline A)\cap \calm_1$ is empty for a generic $\underline A$. \chh{From ${\cals_d}(\underline A)\subset
\cals(\underline A)$ it follows that}
for a generic $\underline A$ is then
the component $\lambda_0$ of ${^h}\blambda$
nonzero for each element of $\cals(\underline A)\cap \calm_1$ and thus
all eigenvalues $\blambda$ are finite.
}
\end{proof}

\section{Numerical methods for polynomial rectangular MEPs} \label{subs:num_polyrect}
\ch{The only numerical approach for a polynomial RMEP that we are aware of uses block Macaulay matrices.
The version for linear RMEPs is presented in \Cref{subs:Macaulay}, but the method was actually designed for
a polynomial RMEP of arbitrary degree. A generalization from linear to the polynomial case is straightforward, for details, see, e.g., \cite{DeMoor_ARMA, DeMoor_LAA}}.
In this section we
present several \ch{new} options to solve a polynomial RMEP numerically.
All methods have in common that
we get solutions of \eqref{rectmep} from eigenvalues of
a joint set of GEPs. For a general polynomial RMEP, all methods lead to singular GEPs
of size larger than the number of solutions of \eqref{rectmep}. One way to solve such problems is by using \ch{the} staircase algorithm from \cite{MP2}.

It is involved to formulate an algorithm for a general polynomial RMEP, as it depends on the structure of the monomials in \eqref{rectmep}.
Instead, we will demonstrate the methods by considering a generic quadratic R2EP (that is, $k=2$ and $d=2$) of the form
\begin{equation}
\label{eq:quadrecttwomep}
(A_{00}+\lambda A_{10} +\mu A_{01} + \lambda^2 A_{20} + \lambda \mu A_{11}+ \mu^2 A_{02})\,\bx = \zero,
\end{equation}
where the $A_{ij}$ are $(n+1) \times n$ matrices.

\subsection{Transformation to a quadratic two-parameter eigenvalue problem} \label{subs:trans_q2ep}
A first option is to use a similar approach as in Algorithm~1.
If we multiply \eqref{eq:quadrecttwomep} by random matrices $P_1$ and $P_2$ of size $n \times(n+1)$, we obtain
a \ch{standard} quadratic 2EP with $n \times n$ matrices
\begin{equation}
\label{eq:quadprojrecttwomep}
(P_iA_{00}+\lambda P_iA_{10} +\mu P_iA_{01} + \lambda^2 P_iA_{20} + \lambda \mu P_iA_{11}+ \mu^2 P_i A_{02})\,\bx_i = \zero,\quad i=1,2.
\end{equation}
The problem \eqref{eq:quadprojrecttwomep} generically has
$4n^2$ eigenvalues that include \ch{the} $2n(n+1)$ solutions
of \eqref{eq:quadrecttwomep}.

\ch{To solve \eqref{eq:quadprojrecttwomep}, we can apply any numerical method for standard quadratic MEPs,
see, e.g., \cite{MP2}.
One option is to} linearize \eqref{eq:quadprojrecttwomep} as a linear 2EP
with $3n \times 3n$ matrices, see, e.g., \cite{uniform, HMP}, for example as
\begin{equation}
\label{eq:linquadRMEP}
(V_{i0}+\lambda V_{i1} +\mu V_{i2})\,\bz_i = \zero,\quad i=1,2,
\end{equation}
where $\bz_i = [\bx_i^T, \, \lambda \bx_i^T, \, \mu \bx_i^T]^T$, and
\[
V_{i0}= \smtxa{ccc}{P_i A_{00} & P_i A_{10} & P_i A_{01} \\[0.5mm]
 0 & -I & 0  \\
 0 & 0 & -I}, \quad
V_{i1}= \smtxa{ccc}{0 & P_i A_{20} & P_i A_{11} \\[0.5mm]
I & 0 & 0 \\
0 & 0 & 0}, \quad
V_{i2}= \smtxa{ccc}{ 0 & 0  & P_i A_{02} \\
0 & 0 & 0  \\
I & 0 & 0 } \
\]
for $i = 1,2$. This gives
joint GEPs $\Delta_1 \bv = \lambda \, \Delta_0 \bv$ and
$\Delta_2 \bv = \mu \, \Delta_0 \bv$ of size $9n^2\times 9n^2$ that are both singular and have $4n^2$ eigenvalues that are
solutions of \eqref{eq:linquadRMEP} (and include \ch{the} $2n(n+1)$ solutions
of \eqref{eq:quadrecttwomep}).
An alternative 
is to
apply a Jacobi--Davidson method for polynomial MEPs from \cite{HMP2} to compute just a subset of eigenvalues
\ch{of \eqref{eq:quadprojrecttwomep}}
close to a given target \ch{that hopefully contains a subset of eigenvalues of
\eqref{eq:quadrecttwomep}).}

\subsection{\ch{Transformation to a linear RMEP}}\label{subs:lin_q2ep}
An approach that leads to smaller $\Delta$-matrices is to apply a linearization directly to
\eqref{rectmep} and transform the problem into a linear RMEP that can be solved by \ch{Algorithm~1 or}
Algorithm~2, for instance. If we
consider \eqref{eq:quadrecttwomep}, then we linearize the problem
as a linear
R2EP $(A+\lambda B_1 + \mu B_2) \, \bz = \zero$ with matrices of size
$(3n+1) \times 3n$, where $\bz = [\bx^T, \, \lambda \bx^T, \, \mu \bx^T]^T$ and, for instance,
\begin{equation} \label{eq:matABC3}
A = \smtxa{ccc}{A_{00} & A_{10} & A_{01} \\[0.5mm]
 0 & -I & 0\\
 0 & 0 & -I}, \quad
B_1 = \smtxa{ccc}{0 & A_{20} & A_{11} \\[0.5mm]
I & 0 & 0 \\
0 & 0 & 0}, \quad
B_2= \smtxa{ccc}{ 0 & 0  & A_{02} \\
 0 & 0 & 0  \\
I & 0  & 0 }.
\end{equation}
Note that the matrices in the first block row of \eqref{eq:matABC3} have size $(n+1) \times n$, while \ch{the other matrices} are of size $n \times n$.
The compression in Algorithm~2 leads
to matrices $D_0, D_1, D_2$ of size $\frac32 n(3n+1)$. The obtained
GEPs are again
singular, which means that in Step~4 of Algorithm~2 we have to apply \ch{the} staircase algorithm from
\cite{MP2}.
This algorithm returns
$2 n(n+1)$ eigenvalues, which are all solutions of \eqref{eq:quadrecttwomep}, thus there are no redundant solutions.

\subsection{Vandermonde compression} \label{subs:vand_compr}
We can exploit the Vandermonde type structure
$\bz = [\bx^T, \, \lambda \bx^T, \, \mu \bx^T]^T$ of the eigenvector in \eqref{eq:matABC3}
to compress the matrices even more. This gives \ch{us} the third approach, which is the most
efficient one.
In a similar way as in \Cref{subs:compress} we observe that solutions of \eqref{eq:quadrecttwomep} lead to an invariant subspace
spanned by all vectors of the form
$[\bx^T, \, \lambda \bx^T, \, \mu \bx^T]^T\otimes [\bx^T, \, \lambda \bx^T, \, \mu \bx^T]^T$,
where $\lambda,\mu\in \C$ and $\bx \in \C^n$. There exists a
$9n^2\times 9n^2$ permutation matrix $S$, such that
\[
\mtxb{\bx \\ \lambda \bx \\ \mu \bx}\otimes
\mtxb{\bx \\ \lambda \bx \\ \mu \bx}
= S\left( \mtxb{1 \\ \lambda \\ \mu}\otimes
\mtxb{1 \\ \lambda \\ \mu}\otimes \bx \otimes \bx \right).
\]
Let $\be_i$ denote the $i$th canonical basis vector of the appropriate dimension.
It has the form $S=I_3\otimes K\otimes I_n$,
where $K$ is a $3n \times 3n$ permutation matrix
\[ K=[\be_1 \ \ \be_4 \ \, \dots \ \, \be_{3n-2} \ \ \be_2 \ \ \be_5 \ \, \dots \ \, \be_{3n-1} \ \ \be_3 \ \ \be_6 \ \, \dots \ \, \be_{3n}],
\]
such that
$[\bx^T, \, \lambda \bx^T, \, \mu \bx^T]^T
=K(\,[1, \, \lambda, \, \mu]^T\otimes \bx \,)$.
We know from \Cref{subs:compress} that there exists an $n^2\times \frac12 n(n+1)$ matrix $T$ such that
$\bx \otimes\bx = T\bw$ for $\bw\in \C^{n(n+1)/2}$. Additionally, we take a
$9\times 6$ matrix $R$ with elements $0$ and $1$ such that
\begin{equation} \label{eq:matrika_R}
[1, \, \lambda, \, \mu]^T\otimes
[1, \, \lambda, \, \mu]^T = R \,
[1, \, \lambda, \, \mu, \, \lambda^2, \, \lambda\mu, \, \mu^2]^T.
\end{equation}
By combining $S$, $R$, and $T$, we get that\
\[
\bz\otimes \bz=S \, (R\otimes T) \ (\,[1, \, \lambda, \, \mu, \, \lambda^2, \, \lambda\mu, \, \mu^2]^T\otimes \bw\,);
\]
$\wt T:=S(R\otimes T)$ is a $9n^2\times 3n(n+1)$ matrix that we can use
to compress the size of the $\Delta$-matrices from $9n^2$ to $3n(n+1)$.
Let
$\wt\Delta_0 =B_1 \otimes B_2-B_2 \otimes B_1$,
$\wt\Delta_1=B_2 \otimes A-A\otimes B_2$, and
$\wt\Delta_2=B_1 \otimes A-A\otimes B_1$ of
size $(3n+1)^2\times 9n^2$ be the corresponding rectangular $\Delta$-matrices for \eqref{eq:matABC3}.
Similar as in \Cref{lem:rows_of_D0},
some of the rows of the matrices $\wt \Delta_i \, \wt T$
are either 0 or equal to other rows up to a sign. In particular, if we label the rows of the matrices
$A$, $B_1$, $B_2$ in
\eqref{eq:matABC3} with the elements of the
vector
\[ [y_1, \, \dots, \, y_{n+1}, \ \, s_1, \, \dots, \, s_n, \ \, t_1, \, \dots, \, t_n]^T,
\]
then the rows of $(3n+1)^2\times 3\chh{n}(n+1)$ matrices $\wt \Delta_i \, \wt T$
have labels that are of the following possible types:
$(y_j,y_k), (y_j, s_p), (y_j, t_p), (s_p, y_k), \dots, (t_p, t_q)$,
where $j,k= 1,\dots,n+1$ and $p,q= 1,\dots,n$.
The following new generalization of \Cref{lem:rows_of_D0} can be observed:
\vspace{-2mm}
\begin{enumerate} \itemsep=-1mm
\item[a)] $\wt \Delta_i \, \wt T((y_j,y_k),:) =-\wt \Delta_i \, \wt T((y_k,y_j),:)$, thus
$\wt \Delta_i \, \wt T((y_j,y_k),:) = 0$ for $j=k$;
\item[b)] $\wt \Delta_i \, \wt T((y_j, s_p),:) =-\wt \Delta_i \, \wt T((s_p,y_j):)$;
\item[c)] $\wt \Delta_i \, \wt T((y_j, t_p),:) =-\wt \Delta_i \, \wt T((t_p,y_j):)$;
\item[d)] $\wt \Delta_i \, \wt T((s_p, s_q),:) = 0$;
\item[e)] $\wt \Delta_i \, \wt T((t_p, t_q),:) = 0$;
\item[f)] $\wt \Delta_i \, \wt T((s_p, t_q),:) = \wt \Delta_i \, \wt T((s_q, t_p),:) =-\wt \Delta_i \, \wt T((t_p, s_q),:) =-\wt \Delta_i \, \wt T((t_q, s_p),:)$.
\end{enumerate}
It follows \ch{from this generalization} that we should select the rows of $\wt \Delta_i \, \wt T$ that correspond to indices labeled
as $(y_j,y_k)$, where $j<k$, $(y_j,s_p)$, $(y_j,t_q)$, and $(s_p,t_q)$ for $p<q$. This gives exactly
$3n(n+1)$ rows. Let $\wt L$ be the corresponding $3n(n+1) \times (3n+1)^2$ selection matrix with elements from $\{0,1\}$ that
generalizes the $L$ from \Cref{ex:exTL}. The $3n(n+1) \times 3n(n+1)$ matrices
$\wt D_i = \wt L \, \wt \Delta_i \, \wt T$, $i = 0,1,2$, can now be used to solve \eqref{eq:quadrecttwomep}.
The $\wt D_i$ are larger than the number of solutions of \eqref{eq:quadrecttwomep} and the GEPs $\wt D_1 \bu = \lambda \, \wt D_0 \bu$ and $\wt D_2 \bu = \mu \, \wt D_0 \bu$ are singular. Again, the staircase algorithm from \cite{MP2} can be applied to compute
$2 n(n+1)$ eigenvalues, which are all solutions of \eqref{eq:quadrecttwomep}; there are no redundant solutions.

\begin{example} \label[example]{ex:quad1}\rm We consider the quadratic R2EP
\begin{equation}
\smtxa{cc}{1 + \ph{1}\lambda + 4\mu + 2\lambda^2 + \ph{1}\lambda\mu + 3\mu^2 & \quad 2 + 3\lambda + \ph{1}\mu + 3\lambda^2 +\ph{1}\lambda\mu + \ph{1}\mu^2\\[0.5mm]
3 + 5\lambda + \ph{1}\mu + \ph{1}\lambda^2 + 2\lambda\mu + 3\mu^2 & \quad 4 + \ph{1}\lambda + 3\mu + \ph{1}\lambda^2 +2\lambda\mu + 2\mu^2\\[0.5mm]
3 + \ph{1}\lambda + 4\mu + \ph{1}\lambda^2 + \chh{2}\lambda\mu + 1\mu^2 & \quad 1 + 4\lambda + \ph{1}\mu + 2\lambda^2 +3\lambda\mu + 2\mu^2} \,
\smtxa{c}{x_1 \\ x_2} = \zero. \label{ex1:quadmatrices}
\end{equation}
We multiply the $49\times 36$ matrices $\wt \Delta_0, \wt \Delta_1, \wt \Delta_2$ on the right by a
$36\times 18$ matrix $\wt T=S(R\otimes T)$, and on the left by a $18\times 49$ matrix $\wt L$.
Here $S=I_3\otimes K\otimes I_2$, where $K=[\be_1\ \be_4\ \be_2\ \be_5\ \be_3\ \be_6]$ is a $6\times 6$ matrix,
$R$ is a $9\times 6$ matrix from \eqref{eq:matrika_R}
and $T$ is a $4\times 3$ matrix as in \Cref{ex:exTL}. Matrix $\wt L$ has one nonzero element 1 in each row, and the columns of nonzero elements
are 2--7, 10--14, 18--21, 27--28, and 35 corresponding to linearly independent rows of $\wt \Delta_i \, \wt T$ for $i = 0,1,2$.
This renders $18\times 18$ matrices $\wt D_i = \wt L \, \wt \Delta_i \, \wt T$ for $i = 0,1,2$.
The staircase algorithm from \cite{MP2}, applied to
 a pair of singular GEPs $\wt D_1 \bu = \lambda \, \wt D_0 \bu$
 and $\wt D_2 \bu = \mu \, \wt D_0 \bu$, returns
the following 12
eigenvalues of \eqref{ex1:quadmatrices}.

{\footnotesize
\begin{center}
\begin{tabular}{ll} \\ \hline \rule{0pt}{2.1ex}%
$~~~~~\lambda$ & $~~~~~\mu$ \\ \hline \rule{0pt}{2.3ex}%
$-7.5148 \pm 10.2523i$ & $-3.8435 \mp 2.4388i$ \\
$-7.6951 \pm 1.3198i$ & $\phantom{-}6.3264 \pm 2.2203i$ \\
$\phantom{-}0.3122 \pm 0.1675i$ & $-0.6460 \mp 1.2328i$ \\
$-0.1483 \pm 0.8975i$ & $-0.8786 \pm 0.1559i$ \\
$-0.8086 \pm 0.3135i$ & $-0.1788 \pm 0.6154i$ \\
$\phantom{-}0.6829$ & $\phantom{-}0.7594$ \\
$-0.9391$ & $-1.0037$ \\ \hline
\end{tabular}
\end{center}}
\end{example}

\begin{example}\rm
We compare the block Macaulay method applied to the quadratic R2EP \eqref{eq:quadrecttwomep} \ch{with random matrices to} the methods proposed in this section.
The block Macaulay method requires degree $m^*=2n+1$ and
matrices of size $(2n^3+3n^2+n) \times (2n^3+5n^2+3n)$ to solve
\eqref{eq:quadrecttwomep} using a linear polynomial for a shift.
In contrast, all methods suggested in this section
use matrices of size $\calo(n^2)$. A comparison for
$n=4,8,\dots,20$ is given in Table \ref{tab:comp_rqep}.

\begin{table}[htb!]
\centering
{\footnotesize
\caption{Sizes of the matrices of the block Macaulay method and the methods from Sections~\ref{subs:trans_q2ep}, \ref{subs:lin_q2ep}, and
\ref{subs:vand_compr} required to solve a generic quadratic
R2MEP \eqref{eq:quadrecttwomep} with matrices of size $(n+1) \times n$ for $n=2,4,\dots,20$.
One dimension is given for square matrices.} \label{tab:comp_rqep}
\begin{tabular}{r|cccc|c} \hline \rule{0pt}{2.7ex}%
$n$ & Block Macaulay & Sec. 5.2 ($\Delta_i$)& Sec. 5.3 ($D_i$)& Sec. 5.4 ($\wt D_i$) & \# Eigs
\\
\hline \rule{0pt}{2.5ex}%
$\ph{1}4$ & $\ph{11}180\times \ph{11}220$ & $\ph{1}144$ & $\ph{11}78$ & $\ph{11}60$ & $\ph{1}40$ \\
$\ph{1}8$ & $\ph{1}1224\times \ph{1}1368$ & $\ph{1}576$ & $\ph{1}300$ & $\ph{1}216$ & $144$ \\
$12$   & $\ph{1}3900\times \ph{1}4212$ & $1296$ & $\ph{1}666$ & $\ph{1}468$ & $312$ \\
$16$   & $\ph{1}8976\times \ph{1}9520$ & $2304$ & $1176$ & $\ph{1}816$ & $544$ \\
$20$   & $17220\times 18060$ & $3600$ & $1830$ & $1260$ & $840$ \\[0.5mm]
$n$   & $n(n+1)(2n+1)\times~n(n+1)(2n+3)$ & $9n^2$ & $\frac32 n(3n+1)$ & $3n(n+1)$ & $2n(n+1)$ \\[1mm] \hline
\end{tabular}}
\end{table}
\end{example}

In the next two sections we will consider particular polynomial RMEPs \eqref{rectmep} related
to finding optimal parameters for ARMA and LTI models. We will see that in some cases, where not all monomials of \ch{total} degree less or equal to $d$
are present, we can find \ch{even} more efficient linearizations resulting in smaller $\Delta$-matrices.

\section{ARMA model} \label{sec:ARMA}
Let $y_1,\dots,y_N\in \R$ be a sequence of $N$
values of a time series, which may be contaminated by noise.
There exist various models for the statistical analysis
of time series using one or more parameters;
see, e.g., \cite{BoxJenkins, Ljung}.
We want to find the optimal values of these parameters that minimize the error.
De Moor \ch{and Vermeersch} show in
\cite{DeMoor_LTI, DeMoor_CIS, DeMoor_ARMA, DeMoor_LAA} that critical points of LTI and ARMA models are eigenvalues of polynomial RMEPs,
and use the block Macaulay matrices to compute them. While
state-of-the-art numerical methods for the identification of parameters in LTI and ARMA models, based on
nonlinear optimization,
converge locally without guarantee to find the optimal solution, the solutions of
the associated \ch{polynomial} RMEPs give all stationary points including the global minimizer.
We will show that we can compute the stationary points of ARMA and LTI models (in this and the next section) more efficiently by using \ch{the} numerical methods from \Cref{subs:num_polyrect}.

The following review of the ARMA model and its relation to an RMEP is based on \cite{DeMoor_ARMA}. The scalar ARMA$(p,q)$ model is
\begin{equation} \label{eq:mod_ARMA}
\sum_{i = 0}^p\alpha_i \, y_{k-i} = \sum_{j= 0}^q \gamma_j \, e_{k-j}, \qquad k = p+1, \dots, N,
\end{equation}
where $p$ and $q$ are the orders of the autoregressive (AR) and the moving-average (MA) part, respectively.
We may assume $\alpha_0 = \gamma_0 = 1$. For a given $\by\in \R^N$, \chh{the goal is to find}
the values of the real parameters
$\alpha_1,\dots,\alpha_p$ and $\gamma_1,\dots,\gamma_q$ that minimize $\|\be\|$ of the error $\be\in \R^{N-p+q}$.
Of interest are the solutions such that the zeros of the characteristic polynomials
of AR and MA part are in the open unit disk; in the rest of this section, we will mention the constraints for the considered cases.
\ch{The optimality conditions for minimizing the error norm $\|\be\|$ are described in \cite{DeMoor_ARMA}.
They lead to a homogeneous system, where the unknown model parameters $\alpha_i$ and $\gamma_j$ appear polynomially up to degree $2$.
These polynomial equations form a quadratic RMEP whose eigenvalues are critical values for the objective function.
Note that we are interested in real eigenvalues only because the parameters of the ARMA model are real. We will now consider some instances of the problem.}

\subsection{ARMA(1,1)} \label{sec:arma11} For $p=q=1$, the corresponding
quadratic R2EP has the form {(with the simplified notation $\alpha_1 = \alpha$ and $\gamma_1 = \gamma$)}
\begin{equation} \label{eq:ARMA11}
(A_{00}+\alpha A_{10}+\gamma A_{01} +\gamma^2 A_{02}) \, \bx = \zero
\end{equation}
with matrices $A_{ij}$ of size $(3N-1) \times (3N-2)$, where \cite{DeMoor_ARMA}
\begin{equation} \label{A02}
A_{00} = \smtxa{cccc}{
\by_{(2)} & I & 0  & 0 \\[0.3mm]
\by_{(1)} & 0 & I &  0\\[0.3mm]
 0 & R & 0 & I \\[0.3mm]
 0 & \by_{(1)}^T & \by_{(2)}^T & 0  \\[0.3mm]
 0 & 0  & 0  & \by_{(2)}^T
},\quad
A_{10}= \smtxa{cccc}{
\by_{(1)} & 0 & 0 & 0 \\[0.3mm]
0 & 0 & 0 & 0 \\[0.3mm]
0 & 0 & 0 & 0 \\[0.3mm]
0 & 0 & \by_{(1)}^T & 0 \\[0.3mm]
0 & 0 & 0 & \by_{(1)}^T
}, \quad
\end{equation}
\[A_{01} = \smtxa{cccc}{
0 & R & 0 & 0 \\[0.3mm]
0 & 0 & R & 0 \\[0.3mm]
0 & 2I & 0 & R \\[0.3mm]
0 & 0 & 0 & 0 \\[0.3mm]
0 & 0 & 0 & 0
},\quad
A_{02}= \smtxa{cccc}{
0 & I & 0 & 0 \\[0.3mm]
0 & 0 & I & 0 \\[0.3mm]
0 & 0 & 0 & I\\[0.3mm]
0 & 0 & 0 & 0 \\[0.3mm]
0 & 0 & 0 & 0
},
\]
$\by_{(1)}= [y_1, \dots, y_{N-1}]^T$,
$\by_{(2)}= [y_2, \dots, y_{N}]^T$,
$I$ is the identity matrix, and
$R$ is a tridiagonal matrix with stencil $[1, \, 0, \, 1]$.
{For ARMA(1,1), of interest are \ch{real} solutions $(\alpha, \gamma)$ in the (open) square domain $(-1,1)^2$.}

Following the proof of \Cref{lem:fin_sol} we see that problem \eqref{eq:ARMA11} has $n(n+1)$ eigenvalues for generic matrices $A_{00},A_{10},A_{01},A_{02}$ of size
$(n+1) \times n$, \ch{where $n=3N-1$}. This is half the number of a generic
quadratic R2EP \eqref{eq:quadrecttwomep}; the difference
is due to the absence of the monomials $\alpha^2$ and $\alpha\gamma$
so that some solution are located at infinity.
We will show
in \Cref{ex:arma11_table} that in the particular case related to ARMA(1,1),
the number of eigenvalues is even smaller, due
to the structure of \eqref{A02}.

We may apply methods from \Cref{subs:num_polyrect} to \eqref{eq:ARMA11}, but as only one of the quadratic monomials is present, we can solve \eqref{eq:ARMA11} more efficiently.
The same problem appears in \cite[Sec.~5.4]{HMP}, with the key difference that now the matrices are rectangular.
As in \cite{HMP}, we introduce a new eigenvalue variable $\xi = \gamma^2$ and treat the
problem as a linear three-parameter eigenvalue problem of the form
\begin{align}
\left(\smtxa{ccc}{0 & 0 \\ 1 & 0} +
\alpha \, \smtxa{ccc}{0 & 0 \\ 0 & 0} +
\gamma \, \smtxa{ccc}{1 & 0 \\ 0 & 1} +
\xi \, \smtxa{ccc}{0 & 1 \\ 0 & 0}\right) \bv
& = \zero,\nonumber\\[-2.5mm]
\label{mep3:ARMA11_New} & \\[-2.5mm]
(A_{00}+ \alpha \, A_{10} + \gamma \, A_{01} + \xi \, A_{02}) \, \bx& = \zero. \nonumber
\end{align}
The first equation in \eqref{mep3:ARMA11_New} has a nonzero $\bv$ of the form $[\gamma, \, -1]^T$ when
$\xi = \gamma^2$.
\ch{Note that in \eqref{mep3:ARMA11_New} we combine rectangular and square multiparameter pencils, which is a new approach.}
Similarly to \Cref{subs:compress}, we then introduce the $2(n+1)^2\times 2n^2$ matrices
\begin{align*}
\wt \Delta_0 & =
\left[\begin{matrix}
A_{02}\otimes A_{\ch{10}} - A_{\ch{10}}\otimes A_{02} & A_{10}\otimes A_{01} - A_{01}\otimes A_{10} \\
 0 & A_{02}\otimes A_{\ch{10}} - A_{\ch{10}}\otimes A_{02}
 \end{matrix}\right],\\
\wt \Delta_1& =
 \left[\begin{matrix}
 A_{00}\otimes A_{02} - A_{02}\otimes A_{00} & A_{01}\otimes A_{00} - A_{00}\otimes A_{01} \\
 A_{02}\otimes A_{01} - A_{01}\otimes A_{02} & A_{00}\otimes A_{02} - A_{02}\otimes A_{00}
 \end{matrix}\right],\\
\wt \Delta_2& =
\left[\begin{matrix}
0 & A_{10}\otimes A_{00} - A_{00}\otimes A_{10} \\
 A_{02}\otimes A_{10} - A_{10}\otimes A_{02} & 0
 \end{matrix}\right],\\
\wt \Delta_3& =
\left[\begin{matrix}
 A_{10}\otimes A_{00} - A_{00}\otimes A_{10} & 0 \\
 A_{01}\otimes A_{00} - A_{00}\otimes A_{01} & A_{10}\otimes A_{00} - A_{00}\otimes A_{10}
 \end{matrix}\right].
\end{align*}
Solutions of \eqref{mep3:ARMA11_New} satisfy
\begin{equation} \label{eq:ARMA11_Delta_tilda}
\wt \Delta_1 \bz = \alpha \, \wt \Delta_0 \bz,\quad
\wt \Delta_2 \bz = \gamma \, \wt \Delta_0 \bz,\quad
\wt \Delta_3 \bz = \xi \, \wt \Delta_0 \bz,
\end{equation}
where $\bz = \bv\otimes \bx \otimes \bx$. Let $T$ be the right compression matrix of size
 $n^2\times \frac12 n(n+1)$ from Algorithm~2 such that $\bx \otimes\bx = T\bw$ for $\bw\in \C^{n(n+1)/2}$ and let $L$ be the corresponding left compression matrix
of size $\frac12 n(n+1) \times (n+1)^2$. We apply $T$ and $L$ to compress \eqref{eq:ARMA11_Delta_tilda} into a
system of singular GEPs
\begin{equation} \label{eq:ARMA11_D_tilda}
\wt D_1 \bu = \alpha \, \wt D_0 \bu,\quad
\wt D_2 \bu = \gamma \, \wt D_0 \bu,\quad
\wt D_3 \bu = \xi \, \wt D_0 \bu,
\end{equation}
where $\bu= \bv\otimes T\bw$ and
$\wt D_i = (I_2 \otimes L) \, \wt\Delta_i \, (I_2 \otimes T)$
for $i = 0,1,2,3$ are matrices of size $n(n+1) \times n(n+1)$.
We solve \eqref{eq:ARMA11_D_tilda} by the staircase algorithm from \cite{MP2}, which works on the set of all these generalized eigenproblems.

\begin{example}\rm\label[example]{ex:arma11}
We take $\by\in \R^{12}$, where
\begin{center}
$\by =$ {\small \begin{tabular}{llllll}
[\ 2.4130 & 1.0033 & 1.2378 & $-0.72191$ & $-0.81745$ & $-2.2918$ \\
\ph{[\ }0.18213 & 0.073557 & 0.55248 & $\ph{-}2.0180$ & $\ph{-}2.6593$ & $\ph{-}1.1791\ ]^T$
\end{tabular}}
\end{center}
and construct matrices $A_{00}, A_{10}, A_{01}, A_{02}$ of size $35\times 34$ according to \eqref{A02}. Using $T$ of size $1156\times 595$ and $L$ of size $595\times 1225$ we construct matrices $\wt D_0,\wt D_1,\wt D_2,\wt D_3$ of size $1190\times 1190$ from
\eqref{eq:ARMA11_D_tilda}. We solve \eqref{eq:ARMA11_D_tilda} by the staircase algorithm from \cite{MP2}. In this particular example, most time is spent \chh{in the first two steps of the staircase algorithm} on two singular value decompositions
(alternatively, we could use a slightly cheaper rank-revealing QR) \chh{of} matrices of sizes $1190\times 1190$ and $1190\times 2988$ respectively. We find 147 eigenvalues of \eqref{eq:ARMA11_D_tilda}, which are all eigenvalues of the RMEP \eqref{eq:ARMA11}.
Only three eigenvalues are real. They are all inside the domain of interest and they give the following local minimum and two saddle points of the objective function \ch{$\|\be\|^2$}.

{\footnotesize
\begin{center}
\begin{tabular}{l|ccc} \hline \rule{0pt}{2.3ex}%
{Type stationary point} & $\alpha$ & $\gamma$ & \ch{$\|\be\|^2$} \\[0.5mm] \hline \rule{0pt}{2.3ex}%
{Saddle point} & $\ph{-}0.3224$ & $\ph{-}0.7799$ & 17.58 \\
Local minimum & $-0.5234$ & $\ph{-}0.0476$ & 13.85 \\
{Saddle point} & $-0.8305$ & $-0.8542$ & 23.78 \\ \hline
\end{tabular}
\end{center}
}

\medskip\noindent
Figure \ref{fig:arma11} confirms that these are indeed the critical points for the ARMA(1,1) model and given data $\by$.
The obtained result is more accurate than the solution obtained by the {\tt armax} function in MATLAB's System Identification Toolbox, which
returns $\alpha=-0.6868$, $\gamma= 0.01091$, and $\ch{\|\be\|^2}= 14.39$. This agrees with the observation in
\cite{DeMoor_ARMA} that values obtained via RMEPs are \chh{closer to the global minimum than that obtained by} {\tt armax}, which uses a nonlinear optimization algorithm described in \cite[Ch.~7]{Ljung}.

\begin{figure}
\begin{center}
\includegraphics[width=80mm]{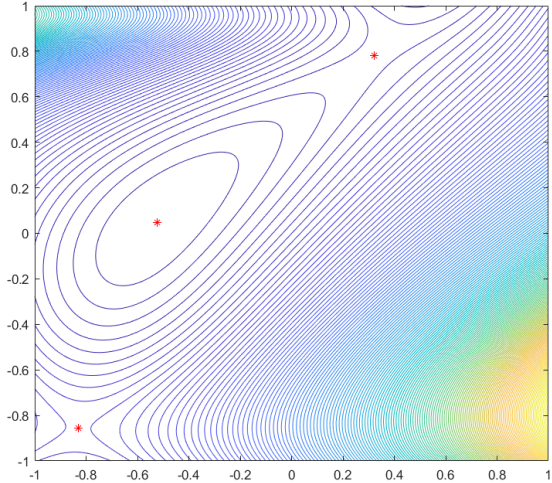}
\vspace{-5mm}
\end{center}
\caption{Contour plot of the objective function \ch{$\|\be\|^2$} for $(\alpha,\gamma) \in [-1,1]^2$ in \Cref{ex:arma11} with computed stationary points, \ch{indicated by a red star}.} \label{fig:arma11}
\vspace{-4mm}
\end{figure}

\end{example}

\begin{example}\rm\label[example]{ex:arma11_table}
We have repeated the above example for several random vectors $\by\in \R^N$ for different $N$;
for all samples the above approach is able to compute the real critical points. As we get the same number of eigenvalues of \eqref{eq:ARMA11}
for all vectors $\by$ of the same length, and the numbers seem to follow a linear pattern---if we increase $N$ by one,
the number of eigenvalues increases by 14---we believe that these are correct numbers of eigenvalues for a generic ARMA(1,1) case.
The results are presented in \Cref{tab:arma11}. For each $N$, we generate 10 random vectors $\by$ in Matlab as {\tt y=randn(N,1)}. In the second column, we give the size
of the $\wt D_i$ from \eqref{eq:ARMA11_D_tilda}, in the third column the number of detected eigenvalues, and in the fourth column
the average time required to solve the ARMA(1,1) problem of a given size.
\chh{All computations in this paper have} been performed in MATLAB 2021b on a PC with 32 GB RAM and i7--11700K 3.6 GHz CPU.

We have also applied the block Macaulay matrix approach from \cite{DeMoor_ARMA}. Both methods return the same critical points, but as the block Macaulay matrices
are much larger than the $\wt D_i$-matrices, we are able to use this method only for small $N$. In \Cref{tab:arma11}, we give the sizes of the matrices and numbers of computed eigenvalues for generic samples $\by$.
The sizes of the block Macaulay matrices for $N\ge 8$ are estimated on a hypothesis established through observations that the required degree is $\ch{m^*=}6(N-1)+1$.
While the sizes of the \chh{$\wt D_i$}-matrices grow with $\calo(N^2)$, the block Macaulay matrices grow as $\calo(N^3)$. We do not report the times for the much slower block Macaulay approach.
Compared to the results in \cite{DeMoor_2023}, where a computation using
a block Macaulay matrix for $N=8$ 
took 41.7 seconds
on a 3.2 GHz M1 CPU MacBook Pro, the new approach is clearly more efficient.

\begin{table}[htb!]
\centering
{\footnotesize \caption{Matrix sizes required to find stationary points for the generic ARMA(1,1) model by the method from \Cref{sec:arma11} and by the block Macaulay approach for $N=4,6,\dots,20$.
Starred elements are estimates.
} \label[table]{tab:arma11}
\vspace{1mm}
\begin{tabular}{cc|cc|cc} \hline \rule{0pt}{2.6ex}%
$N$ & \# Eigs & $\wt D$ & Time (s)
& Degree & Macaulay matrix \\ \hline \rule{0pt}{2.3ex}%
\ph{1}4 & $\ph{1}35$ & $\ph{1}110$ & $0.008$ & \ph{1}19\ph{*} & $\ph{11}1881\times \ph{11}2100$ \\
\ph{1}6 & $\ph{1}63$ & $\ph{1}272$ & $0.027$ & \ph{1}31\ph{*} & $\ph{11}7905\times \ph{11}8448$ \\
\ph{1}8 & $\ph{1}91$ & $\ph{1}506$ & $0.085$ & \ph{1}43* & $\ph{1}20769\times \ph{1}21780$ \\
10 & $119$ & $\ph{1}812$ & $0.225$ & \ph{1}55* & $\ph{1}43065\times \ph{1}44688$ \\
12 & $147$ & $1190$ & $0.595$ & \ph{1}67* & $\ph{1}77385\times \ph{1}79764$ \\
14 & $175$ & $1640$ & $1.43\ph{0}$ & \ph{1}79* & $126321\times 129600$ \\
16 & $203$ & $2162$ & $3.51\ph{0}$ & \ph{1}91* & $192465\times 196788$ \\
18 & $231$ & $2756$ & $7.41\ph{0}$ & 103* & $278409\times 283920$ \\
20 & $259$ & $3422$ & $14.1\ph{000}$ & 115* & $386745\times 393588$ \\ \hline
\end{tabular}}
\end{table}

We note that a generic ARMA(1,1) problem, due to the special structure \eqref{A02} of the matrices, has far fewer finite solutions than a
generic quadratic R2EP with monomials $1,\lambda,\mu,\mu^2$ and matrices
of the same size, since some of the solutions are at infinity.
\end{example}

\subsection{ARMA(2,1)} The choice $p=2$ and $q= 1$ leads to
a quadratic three-parameter RMEP (with the simplified notation $\gamma_1 = \gamma$)
\begin{equation} \label{eq:arma21}
(A_{000}+\alpha_1 A_{100} + \alpha_2 A_{010} + \gamma A_{001} + \gamma^2 A_{002}) \, \bx = \zero,
\end{equation}
with matrices of size $(4N-5) \times (4N-7)$ that we get from the optimality conditions
in an analogous way as for ARMA(1,1) \ch{(for the exact construction of the matrices, see \cite{DeMoor_ARMA})}.
For ARMA(2,1), the solutions of interest should satisfy $|\gamma| < 1$, and the roots of $t^2 + \alpha_1 t + \alpha_2$ should be in the open unit disk.
We introduce $\xi = \gamma^2$ similar to \eqref{mep3:ARMA11_New} and write
the problem as a linear four-parameter eigenvalue problem
\begin{align}
 \left(\smtxa{ccc}{0 & 0 \\ 1 & 0} +
 \alpha_1 \, \smtxa{ccc}{0 & 0 \\ 0 & 0} +
 \alpha_2 \, \smtxa{ccc}{0 & 0 \\ 0 & 0} +
 \gamma \, \smtxa{ccc}{1 & 0 \\ 0 & 1} +
 \xi \, \smtxa{ccc}{0 & 1 \\ 0 & 0}\right) \bv
 & = \zero,\nonumber\\[-2.5mm]
 \label{mep3:ARMA21_New} & \\[-2.5mm]
 (A_{000}+ \alpha_1 \, A_{100}+ \alpha_2 \, A_{010} + \gamma \, A_{001} + \xi \, A_{002}) \, \bx& = \zero,\nonumber
\end{align}
\ch{where we again combine the equations with rectangular and square matrices.}
If we form the corresponding $\wt \Delta_i$-matrices then
solutions of \eqref{mep3:ARMA21_New} satisfy
\begin{equation} \label{eq:ARMA21_Delta_tilda}
\wt \Delta_1 \bz = \alpha_1 \, \wt \Delta_0 \bz,\quad
\wt \Delta_2 \bz = \alpha_2 \, \wt \Delta_0 \bz,\quad
\wt \Delta_3 \bz = \gamma \, \wt \Delta_0 \bz,\quad
\wt \Delta_4 \bz = \xi \, \wt \Delta_0 \bz,
\end{equation}
where $\bz = \bv\otimes \bx \otimes \bx \otimes \bx$. Using the right compression matrix $T$ of size
$n^3\times \frac16 n(n+1)(n+2)$, where $n=4N-7$, from Algorithm~2 such that $\bx \otimes\bx \otimes\bx = T\bw$ for $\bw\in \C^{n(n+1)(n+2)/6}$ and the corresponding left compression matrix $L$
of size $\frac16 n(n+1)(n+2) \times (n+2)^3$, we compress \eqref{eq:ARMA21_Delta_tilda} into a
system of singular GEPs
\begin{equation} \label{eq:ARMA21_D_tilda}
\wt D_1 \bu = \alpha_1 \, \wt D_0 \bu,\quad
\wt D_2 \bu = \alpha_2 \, \wt D_0 \bu,\quad
\wt D_3 \bu = \gamma \, \wt D_0 \bu,\quad
\wt D_4 \bu = \xi \, \wt D_0 \bu,
\end{equation}
where $\bu= \bv\otimes T\bw$ and
$\wt D_i = (I_2 \otimes L) \, \wt\Delta_i \, (I_2 \otimes T)$.

Since the $\wt D_i$-matrices are of size $\frac16(4N-7)(4N-6)(4N-5)$, we
are able to use this approach only for small $N$. For instance, for
$N=5, \dots, 10$ we get the $\wt D_i$-matrices of size $910, 1938, 3542, 5850, 8990, 13090$, respectively.

\begin{example}\rm
We take
\[\by = \fns
 [0.41702,\ 0.72032,\ 0.01234,\ 0.30233,\ 0.14676,\  0.09234,\  0.18626]^T
 \]
and construct $A_{000}, A_{100}, A_{010}, A_{001}, A_{002}$ of size $23\times 21$ for \eqref{eq:arma21}. Using matrices
$T$ of size $9261\times 1771$ and $L$ of size $1771\times 12167$ we construct $\wt D_0,\wt D_1,\wt D_2,\wt D_3,\wt D_4$ of size $3542\times 3542$.
A staircase type algorithm from \cite{MP} finds 29 eigenvalues of 
\eqref{eq:arma21} in $5.4$ seconds.
Three eigenvalues are real, they are all inside the domain of interest
and give the following critical points $(\alpha_1,\alpha_2,\gamma)$ of the objective function \ch{$\|\be\|^2$}:

{\footnotesize
\begin{center}
\begin{tabular}{l|cccc} \hline \rule{0pt}{2.2ex}%
{Type stationary point} & $\alpha_1$ & $\alpha_2$ & $\gamma$ & \ch{$\|\be\|^2$} \\[0.5mm] \hline \rule{0pt}{2.3ex}%
{Saddle point} & $0.08749$ & $-0.4354$ & $-0.2483$ & $0.055219$ \\
{Local minimum} & $0.08843$ & $-0.4188$ & $-0.1575$ & $0.055204$ \\
{Saddle point} & $0.06827$ & $-0.2921$ & $\ph{-}0.4172$ & $0.057668$ \\ \hline
\end{tabular}
\end{center}
}
\end{example}

Although \ch{this} new approach yields considerably smaller matrices compared to a block Macaulay approach, the sizes of the associated matrices still grow rapidly for larger values of $p$, $q$, and $N$.

\section{LTI} \label{sec:LTI}
In the least squares optimal realization problem of autonomous linear time-invariant systems (LTI($p$)),
we want to find
parameters $\alpha_1,\dots,\alpha_p$ that admit the best 2-norm approximation of a given $\by\in \R^N$ by $\wh \by\in \R^N$ whose elements satisfy the difference equation
\[
\wh y_{k+p}+\alpha_1 \, \wh y_{k+p-1}+\cdots+\alpha_p \,\wh y_k= 0, \qquad k= 1,\dots,N-p,
\]
where $p$ is the order of the LTI.

The optimality conditions for minimizing the error norm \ch{$\|\by-\wh\by\|$} are described in \cite{DeMoor_LTI}.
In a similar way as for the \chh{ARMA model}, the critical values $\alpha_1,\dots,\alpha_p$ of the objective function
are eigenvalues of a quadratic RMEP. As before we are interested in real eigenvalues only. We will now consider instances of the probem
for $p=2$ and $p=3$.

\subsection{LTI(2)}
An LTI(1) model leads to a standard one-parameter quadratic eigenvalue problem that can be solved with several well-known numerical methods.
When we add one parameter and consider the LTI(2) model, then
the corresponding quadratic R2EP has the form\cite{DeMoor_LTI}
\begin{equation} \label{eq:LTI2}
(A_{00}+\alpha_1 A_{10}+\alpha_2 A_{01} +\alpha_1^2 A_{20} + \alpha_1\alpha_2 A_{11} +
\alpha_2^2 A_{02}) \, \chh{\bx} = \zero,
\end{equation}
where $A_{00}$, $A_{10}$, and $A_{01}$ are the $(3N-4) \times(3N-5)$ matrices
\[
\smtxa{cccc}{
\by_{(3)} & I & 0& 0\\[0.3mm]
\by_{(2)} & R & I & 0\\[0.3mm]
\by_{(1)} & S & 0 & I \\[0.3mm]
0 & \by_{(2)}^T & \by_{(3)}^T & 0\\[0.7mm]
0 & \by_{(1)}^T& 0 & \by_{(3)}^T
}, \quad
\smtxa{cccc}{
\by_{(2)} & R & 0& 0\\[0.3mm]
 0 & 2I & R & 0\\[0.3mm]
 0 & R& 0 & R\\[0.3mm]
 0 & 0 & \by_{(2)}^T & 0\\[0.3mm]
 0 & 0 & 0 & \by_{(2)}^T
}, \quad
\smtxa{cccc}{
\by_{(1)} & S & 0 & 0\\[0.3mm]
 0 & R & S & 0\\[0.3mm]
 0 & 2I & 0 & S \\[0.3mm]
 0 & 0 & \by_{(1)}^T & 0\\[0.3mm]
 0 & 0 & 0 & \by_{(1)}^T
},
\]
respectively, $A_{20}=A_{02}$ are as in \eqref{A02}, $A_{11} = A_{02} \cdot \text{diag}(R,R,R,R)$,
$\by_{(1)}= [y_1, \dots, y_{N-2}]^T$,
$\by_{(2)}= [y_2, \dots, y_{N-1}]^T$,
$\by_{(3)}= [y_3, \dots, y_{N}]^T$,
$R$ is tridiagonal with stencil $[1, \, 0, \, 1]$,
\ch{and} $S$ is pentadiagonal with stencil $[1, \, 0, \, 0, \, 0, \, 1]$.
Based on the five-parameter linearization for the quadratic MEP in \cite[Sec.~4]{HMP}
we use a similar approach as in \Cref{sec:ARMA} and linearize \eqref{eq:LTI2} as a four-parameter eigenvalue problem by introducing
two additional parameters $\xi_1= \alpha_1\alpha_2$ and
$\xi_2= \alpha_1^2+\alpha_2^2$.
This problem has the form
\begin{align}
\left(\smtxa{ccc}{0 & 0 \\ 1 & 0} +
\alpha_1 \, \smtxa{ccc}{1 & 0 \\ 0 & 0} +
\alpha_2 \, \smtxa{ccc}{0 & 0 \\ 0 & 1}
+ \xi_1 \, \smtxa{ccc}{0 & 1 \\ 0 & 0}
+ \xi_2 \, \smtxa{ccc}{0 & 0 \\ 0 & 0}
\right) \bv_1
& = \zero, \nonumber\\
\left(\smtxa{ccc}{0 & 0 \\ 1 & 0} +
\alpha_1 \, \smtxa{ccc}{1 & 0 \\ 0 & 1} +
\alpha_2 \, \smtxa{ccc}{1 & 0 \\ 0 & 1}
+ \xi_1 \, \smtxa{ccc}{0 & 2 \\ 0 & 0}
+ \xi_2 \, \smtxa{ccc}{0 & 1 \\ 0 & 0}
\right) \bv_2
& = \zero,\label{mep4:LTI2}\\
(A_{00}+ \alpha_1 \, A_{10} + \alpha_2 \, A_{01} +
\xi_1 \, A_{11} + \xi_2 \, A_{02}) \, \bx& = \zero. \nonumber
\end{align}
The first two equations in \eqref{mep4:LTI2} have nonzero solutions $\bv_1$ and $\bv_2$, of the form $[\alpha_2, \, -1]^T$ when $\xi_1= \alpha_1\alpha_2$ and $[\alpha_1+\alpha_2, \, -1]^T$
when $\xi_2= \alpha_1^2+\alpha_2^2$, respectively.

If we form the corresponding $\wt \Delta_i$-matrices of size $4(3N-4)^2\times 4(3N-5)^2$, then
solutions of \eqref{eq:LTI2} satisfy
\begin{equation} \label{eq:LTI2_Delta_tilda}
\wt \Delta_1 \bz = \alpha_1 \, \wt \Delta_0 \bz,\quad
\wt \Delta_2 \bz = \alpha_2 \, \wt \Delta_0 \bz,\quad
\wt \Delta_3 \bz = \xi_1 \, \wt \Delta_0 \bz,\quad
\wt \Delta_4 \bz = \xi_2 \, \wt \Delta_0 \bz,
\end{equation}
where $\bz = \bv_1 \otimes \bv_2 \otimes \bx \otimes \bx$. Using the right compression matrix $T$ of size
$n^2\times \frac{1}{2}n(n+1)$, where $n=3N-5$, such that $\bx \otimes\bx = T\bw$ for $\bw\in \C^{n(n+1)/2}$ and the corresponding left compression matrix $L$
of size $\frac{1}{2}n(n+1) \times (n+1)^2$, we compress \eqref{eq:LTI2_Delta_tilda} into a
system of singular GEPs
\begin{equation} \label{eq:LTI2_D_tilda}
\wt D_1 \bu = \alpha_1 \, \wt D_0 \bu,\quad
\wt D_2 \bu = \alpha_2 \, \wt D_0 \bu,\quad
\wt D_3 \bu = \xi_1 \, \wt D_0 \bu,\quad
\wt D_4 \bu = \xi_2 \, \wt D_0 \bu,
\end{equation}
where $\bu=T\bw\otimes \bv_1 \otimes \bv_2$ and
$\wt D_i = (I_4\otimes L) \, \wt\Delta_i \, (I_4\otimes T)$.
The matrices $\wt D_i$ of size $2(3N-5)(3N-4) \times 2(3N-5)(3N-4)$ are much smaller than the matrices required to solve \eqref{eq:LTI2} using the block Macaulay method.

\begin{example} \label[example]{ex:LTI2} \rm
We take $\by\in \R^{10}$, where
\begin{center}
$\by =$ {\small \begin{tabular}{lllll}
[\ 0.69582 & $\ph{-}0.68195$ & $-0.24647$ & $\ph{-}0.50437$ & $-0.23207$ \\
\ph{[\ }0.34559 & $-0.19628$ & $\ph-0.20553$ & $-0.17737$ & $\ph{-}0.11543\ ]^T$.
\end{tabular}}
\end{center}
This vector $\by$ has been constructed as follows. First, we select a 
\ch{nonzero}
vector $\bz$ 
such that the elements of $\bz$ satisfy the difference equation $z_k+\alpha_1 z_{k-1}+ \alpha_2 z_{k-2}= 0$
\ch{for $\alpha_1 = 0.6$ and $\alpha_2 = -0.25$}.
Then we perturb $\by = \bz+\be$ with a command of the form {\tt rng(1), e=0.1*randn(10,1)}.
By this construction, $(\alpha_1, \alpha_2)$ is a good approximation to the (global) minimizer, and $\|\be\|^2= 0.05392$ is a good approximation to the minimum.

We build matrices $A_{ij}$ for $0\le i+j\le 2$ of size $26\times 25$.
The above procedure leads  to a system \eqref{eq:LTI2_D_tilda} with matrices of size $1300\times 1300$.
A staircase algorithm, applied to \eqref{eq:LTI2_D_tilda},
returns 1059 eigenvalues of \eqref{mep4:LTI2}. There are 11 real eigenvalues $(\alpha_1,\alpha_2)$ that give stationary points of the objective function \ch{$\|\by-\wh\by\|^2$}, the minimum is obtained at $(\alpha_1,\alpha_2) =(0.60076, -0.26572)$, where $\ch{\|\by-\wh\by\|^2}= 0.03991$.
As expected, this is close to the parameters of the initial vector before the perturbation.
\end{example}

\begin{example}\rm\label[example]{ex:lti2_11}
We repeat the computation of the optimal parameters for the LTI(2) model
for random vectors $\by$ of different sizes. As we obtain the same number of eigenvalues of \eqref{eq:LTI2} for all vectors $\by$ of the same size, and the numbers follow a quadratic polynomial pattern, we believe that these numbers of eigenvalues are true for a generic case.
For vectors $\by$ of small length we applied also the block Macaulay approach \ch{and obtained the same critical points.}

The results are presented in \Cref{tab:LTI2}. For each $N$ we generated 10 random vectors $\by$ in Matlab with entries from a standard normal distribution.
In the second column we give the size
of the $\wt D_i$ from \eqref{eq:LTI2_D_tilda}, in the third column the number of eigenvalues found and in the fourth column the average time required to solve the LTI(2) problem of a given size. In fourth and fifth column we give the required degree and the
size of the block Macaulay matrices.
The required degrees and sizes for $N\ge 10$ are based on an observation
\chh{that the required degree} to extract the eigenvalues is $\chh{m^*=6\,(N-2)}$.
As for the ARMA(1,1) model, the sizes of $\wt D_i$-matrices grow with $\calo(N^2)$ and the sizes of
block Macaulay matrices grow as $\calo(N^3)$.
The new method is considerably faster than the block Macaulay approach used in \cite{DeMoor_2023},
\chh{where it is reported that} a computation needed 2.3 seconds to solve the problem for $N=6$. \chh{For a comparison of
hardware used in \cite{DeMoor_2023} and in \Cref{tab:LTI2}, see \Cref{ex:arma11_table}.}

\begin{table}[htb!]
\centering
{\footnotesize \caption{Sizes of the matrices for the new approach versus the block Macaulay matrices for the LTI(2) problem in \Cref{ex:lti2_11} for $\by \in \R^N$. Starred elements are estimates.} \label{tab:LTI2}
\begin{tabular}{rc|cc|cc} \hline \rule{0pt}{2.5ex}%
$N$ & \# Eigs & $\wt D$ & Time (s)
& Degree & Macaulay matrix \\
\hline \rule{0pt}{2.3ex}%
$\ph{1}4$ & $\ph{11}51$ & $\ph{1}112$ & $0.015$ & 12\ph{*} & $\ph{11}528\times \ph{11}637$ \\
$\ph{1}6$ & $\ph{1}243$ & $\ph{1}364$ & $0.158$ & 24\ph{*} & $\ph{1}3864\times \ph{1}4225$ \\
$\ph{1}8$ & $\ph{1}579$ & $\ph{1}760$ & $1.18\ph{0}$ & 36\ph{*} & $12600\times 13357$ \\
$10$ & $1059$ & $1300$ & $7.03\ph{0}$ & 48* & $29328\times 30625$ \\
$12$ & $1683$ & $1984$  & $32.5\ph{000}$ & 60* & $56640\times 58621$ \\ \hline
\end{tabular}}
\end{table}

Compared to the ARMA(1,1) model, we see that for LTI(2) we get many more eigenvalues for the same $N$. The number of eigenvalues of \eqref{eq:LTI2}
grows quadratically with $N$, while in ARMA(1,1) the growth is linear. If we divide the number of eigenvalues
of \eqref{eq:LTI2} by the size of the $\wt D_i$-matrices, this quotient increases to one for $N \to \infty$. In contrast, for the ARMA(1,1) problem the number of eigenvalues of \eqref{eq:ARMA11} divided by
the size of the $\wt D_i$-matrices decreases to $0$.
\end{example}

\subsection{LTI(3)} Omitting the details, we \ch{also} mention that a LTI(3) model leads to a quadratic three-parameter RMEP
with $(4N-9) \times(4N-11)$ matrices of the form \ch{(see \cite{DeMoor_LTI} for the exact construction of the matrices)}
\[
(A_{000} +\alpha_1 A_{100}+\alpha_2 A_{010} +\alpha_3 A_{001} + (\alpha_1\alpha_2 + \alpha_2\alpha_3)\,A_{110} +
\alpha_1\alpha_3A_{101}+(\alpha_1^2+ \alpha_2^2+\alpha_3^2)\, A_{200}) \, \bz = \zero.
\]
By introducing the variables
$\xi_1= \alpha_1\alpha_2+\alpha_2\alpha_3$, $\xi_2= \alpha_1\alpha_3$, and
$\xi_3= \alpha_1^2+\alpha_2^2+\alpha_3^2$, we can write this as a linear six-parameter RMEP,
construct rectangular operator determinants $\wt \Delta_i$ and compress them into $\wt D_i$ similar to \eqref{eq:LTI2_Delta_tilda}--\eqref{eq:LTI2_D_tilda}.
The corresponding matrices $\wt D_0,\dots,\wt D_6$ are of size $4\binom{4N-11}{3}\times 4\binom{4N-11}{3}$, which gives
$7752\times 7752$ for the smallest nontrivial $N=7$.
In view of this size, we do not provide numerical results.

\section{Conclusions} \label{sec:concl}
Driven by the connection of ARMA and LTI models to polynomial RMEPs,
we have studied a \ch{novel} solution approach of these problems.
In a new theoretical contribution, we have counted the number of eigenvalues of a generic polynomial RMEP
in \Cref{lem:fin_sol}.

We \ch{have suggested} two approaches to numerically solve a (polynomial) RMEP. The first approach is to transform the problem into a square (polynomial) MEP
by random projections or selections of rows, and then solve it by existing numerical methods for MEPs.
The second approach is to \chh{apply} a compression  \chh{to the related operator determinants},
which leads to a system of GEPs that again can be solved by existing numerical methods. \chh{For polynomial RMEPs we introduced Vandermonde compression and demonstrate it on a quadratic R2EP.}
The second approach produces smaller matrices and is more appropriate for problems that are small
enough \ch{so} that the obtained GEPs can be solved with direct eigenvalue solvers; for a comparison, see \Cref{tab:compare}.

We have applied the second approach to compute the stationary points of ARMA and LTI models, which can be formulated as polynomial RMEPs \ch{as proposed in \cite{DeMoor_ARMA,DeMoor_LTI}}.
With this method we can compute \emph{all} stationary points, in contrast to, e.g., gradient methods.
Compared to the methods of \cite{DeMoor_LTI, DeMoor_CIS, DeMoor_ARMA, DeMoor_LAA}, which use block Macaulay matrices \ch{to solve the polynomial RMEPs}, the presented technique employs much smaller matrices; see \Cref{subs:Macaulay} for more details.
The main ideas from \Cref{sec:ARMA} and \Cref{sec:LTI} may also be applied to find critical points of more general linear models, for instance,
given data $\by\in\R^N$ and matrices $A_0,A_1\in\R^{(N-1)\times N}$ and $C_0,C_1\in\R^{(N-1)\times N}$, find a (local) minimizer $(\alpha,\gamma)\in\R^2$ for $\|\be\|$ where $(A_0+\alpha A_1)\,\by = (C_0+\gamma C_1)\,\be$.

As a byproduct of our study of RMEPs, we have introduced new linearizations of quadratic MEPs.
For instance, linearization \eqref{mep4:LTI2} is in the line of those in \cite{HMP}, but is a new case since the matrices $A_{20}$ and $A_{02}$ are identical. \ch{We also applied a new technique of considering systems of polynomial multivariate matrix equations, where in some
equations matrices are square and rectangular in others.}

Table~\ref{tab:nr} displays  a list of cases that we have discussed \ch{in this paper}.
The number of solutions in the fourth column corresponds to a generic problem of a given type.
Due to the structure of the matrices, the number of solutions for the particular applications ARMA and LTI is much smaller
\chh{than} the number given in
the fourth column as many solutions are infinite. Also, in these examples only real solutions are relevant and there are just
few such solutions for each of the problems.
The fifth column shows the sizes of the associated \ch{GEPs} after \chh{the} compression. \chh{The} quadratic R2EP in the \chh{fourth} row \chh{is the only example where we use Vandermonde compression} and also the only example with \ch{redundant} eigenvalues.

\begin{table}[htb!]
\centering
\caption{Number of eigenpairs for a few relevant RMEPs, \ch{tackled in this paper}. The problems have size $(n+k-1) \times n$, where $k$ is the number of parameters ($\lambda, \mu, \dots$).} \label{tab:nr}
\vspace{1mm}
{\footnotesize \begin{tabular}{cllcc} \hline \rule{0pt}{2.3ex}%
$k$ & Rectangular problem & Application & \# Solutions & Size of GEP \\ \hline \rule{0pt}{2.6ex}%
2 & $A+\lambda B+\mu C$ & Rank drop & $\frac12 n(n+1)$ & $\frac12 n(n+1)$ \\[0.5mm]
& $A+\lambda B+\mu C+\mu^2 D$ & ARMA(1,1) & $ n(n+1)$ & $n(n+1)$ \\[0.5mm]
& $A+\lambda B+\mu C+\lambda^2 D+\lambda \mu E+\mu^2 D$ & LTI(2) & $ 2n(n+1)$ & $\chh{2}n(n+1)$ \\[0.5mm]
& \chh{$A+\lambda B+\mu C+\lambda^2 D+\lambda \mu E+\mu^2 F$} & \chh{Rank drop} & \chh{$ 2n(n+1)$} & \chh{$3n(n+1)$} \\[0.5mm]
3 & $A+\lambda B+\mu C+\nu D$ & Rank drop & $\frac16 n(n+1)(n+2)$ & $\frac16 n(n+1)(n+2)$ \\[1mm]
& $A+\lambda B+\mu C+\nu D+\nu^2 E$ & ARMA(2,1) & $\frac13 n(n+1)(n+2)$ & $\frac13 n(n+1)(n+2)$ \\[1mm]
\ch{$k$} & \ch{$A+\lambda_1 B_1+\cdots+\lambda_k B_k$} & \ch{Rank drop} & \ch{$\binom{n+k-1}{k}$} & \ch{$\binom{n+k-1}{k}$} \\[1.5mm] \hline
\end{tabular}}
\end{table}

Numerical methods and examples for a generic linear RMEP, a generic quadratic R2EP, and for ARMA(1,1), ARMA(2,1), and LTI(2) models, based on a transformation to a MEP, are available in \ch{the Matlab toolbox} \chh{{\tt MultiParEig}} \cite{MultiParEig}, together with numerical methods for
MEPs and systems of GEPs.
Due to the size of the involved matrices, it still seems very challenging to solve LTI and ARMA problems with RMEPs for more than a few parameters or for larger values of $N$. \ch{On the other hand, these methods guarantee to find the globally optimal solution, which might be relevant for small problems.}

\section*{Acknowledgements}
T.~Ko\v{s}ir and B.~Plestenjak have been supported by the \ch{Slovenian Research and Innovation Agency}, Research Grant N1-0154.
\ch{We are grateful to the referees for suggestions that improved the document considerably, especially to one referee who has been of tremendous help to us.}

\bibliographystyle{plain}

\end{document}